\documentclass[10pt]{article}

%
\usepackage[hmargin=35mm,vmargin=40mm]{geometry}

%
\usepackage{amsmath}              
\usepackage{amssymb}              
\usepackage{amsthm}               
\usepackage{enumerate}            
\usepackage{graphicx}             
\usepackage{mathrsfs}             
\usepackage[tight]{subfigure}     
\usepackage{url}                  

%
%

%
\newtheorem{theorem}{Theorem}
\newtheorem{lemma}[theorem]{Lemma}
\newtheorem{corollary}[theorem]{Corollary}

\newtheorem{observation}[theorem]{Observation}
\newtheorem*{problem}{Problem}

\theoremstyle{definition}
\newtheorem*{defn}{Definition}
\newtheorem*{construction}{Construction}
\newtheorem*{remark}{Remark}

%
%

%
\newcommand{\coNP}{\textbf{co-NP}}
\newcommand{\fpg}[1]{\Gamma({#1})}
\newcommand{\ms}{\mathcal{M}}
\newcommand{\NP}{\textbf{NP}}
\newcommand{\problemname}[1]{\textsc{#1}}
\newcommand{\R}{\mathbb{R}}
\newcommand{\regina}{\emph{Regina}}
\newcommand{\torus}{\Theta}
\newcommand{\tri}{\mathcal{T}}


%
%

\title{The complexity of detecting taut angle structures \\ on triangulations}
\author{Benjamin A.~Burton and Jonathan Spreer}

\date{October 3, 2012} 

\begin{document}

\maketitle

\begin{abstract}
    There are many fundamental algorithmic problems on triangulated
    3-manifolds whose complexities are unknown. Here we study the
    problem of finding a taut angle structure on a 3-manifold triangulation,
    whose existence has implications for both the geometry and
    combinatorics of the triangulation. We prove that detecting taut
    angle structures is \NP-complete, but also fixed-parameter tractable in the
    treewidth of the face pairing graph of the triangulation. These
    results have deeper implications: the core techniques can serve as a
    launching point for approaching decision problems such as unknot
    recognition and prime decomposition of 3-manifolds.

    \medskip

    \noindent \textbf{Keywords}\quad
    Computational topology, triangulations,
    3-manifolds, taut structures, angle structures
\end{abstract}

%
%

\section{Introduction}
\label{sec:intro}

Much work in 3-dimensional topology is driven by algorithmic problems.
Examples include
\emph{unknot recognition} (testing whether a knot in $\R^3$ is trivial),
\emph{3-sphere recognition} (testing whether a triangulated 3-manifold
is a topological sphere),
\emph{connected sum decomposition} (decomposing a 3-manifold into
``prime'' pieces),
\emph{JSJ decomposition} (decomposing a 3-manifold into pieces with geometric
structures),
and the \emph{homeomorphism problem} (testing whether two triangulated
3-manifolds are topologically equivalent).

Many of these algorithms are new; for instance, 3-sphere recognition was
only solved in 1992 by Rubinstein \cite{rubinstein95-3sphere},
and the homeomorphism problem was only solved in 2003 with Perelman's
proof of the geometrisation conjecture \cite{kleiner08-perelman},
which ties together many complex sub-algorithms by many different authors
\cite{jaco05-lectures-homeomorphism}.  Some algorithms, such as unknot
recognition, 3-sphere recognition and connected sum decomposition, have been
implemented \cite{burton04-regina} but require exponential time;
others are currently so slow and so complex that they
have never been implemented at all.

In this paper we consider the computational complexity of problems
such as these in 3-di\-men\-sion\-al topology,
where many important questions remain wide open.
For instance, it is a major open question as to whether unknot
recognition and 3-sphere recognition can be solved in polynomial time.
Both problems are known to lie in \NP\ \cite{hass99-knotnp,schleimer11-np},
and in recent announcements both problems also lie in \coNP\ if the
generalised Riemann hypothesis holds \cite{hass12-conp,kuperberg11-conp}.
Nevertheless, current state-of-the-art algorithms for both problems
still require exponential time.

There is one prominent hardness result in this area, due to Agol, Hass
and Thurston, involving \emph{knot genus}:
if we generalise unknot recognition to
computing the genus of a knot,
\emph{and} we generalise the ambient space from
$\R^3$ to an arbitrary 3-manifold, then the problem becomes \NP-complete
\cite{agol06-knotgenus}.
The underlying proof technique also applies to problems
relating to least-area surfaces
\cite{agol06-knotgenus,dunfield11-spanning}.

Beyond the results cited above, very little is known about the
computational complexity of difficult algorithmic problems such
as these in 3-dimensional topology.

In this paper we address the problem of finding a \emph{taut angle structure}
on a triangulated 3-manifold (as outlined below).  In particular, we show that
this problem is both \NP-complete and fixed-parameter tractable.
To the authors' best knowledge,
this is the first parameterised complexity result in areas relating to
difficult 3-manifold
recognition/decomposition problems, and the first such NP-completeness result
that is not based on the Agol-Hass-Thurston construction.
More importantly, the techniques that we describe here offer a potential
launching point for obtaining such results in the related
setting of \emph{normal surface theory}, a key ingredient in all
of the decomposition and recognition problems outlined above.
We discuss these possibilities further in Section~\ref{sec:conc}.

Taut angle structures were introduced by Lackenby \cite{lackenby00-taut},
and offer a bridge between the combinatorial structure of a
triangulation and the geometric structure of the underlying manifold.
Taut angle structures
are combinatorial objects that act as limiting cases of the more
general \emph{angle structures}, as introduced by Rivin
\cite{rivin94-structures,rivin03-combopt} and Casson;
these in turn act as linear
analogues of \emph{complete hyperbolic structures}, which play an
important role in recognising and distinguishing triangulated
hyperbolic 3-manifolds.
Despite their simple discrete combinatorial description,
taut angle structures can in the right setting
lead to strict angle structures \cite{kang05-taut2}
and then complete hyperbolic structures \cite{futer11-angled},
which in general are highly desirable but also potentially elusive.

More specifically,
a \emph{taut angle structure} on a 3-manifold triangulation $\tri$
assigns interior angles
$\{0,0,0,0,\pi,\pi\}$ to the six edges of each tetrahedron of $\tri$,
so that the two $\pi$ angles are
opposite in each tetrahedron, and so that around each edge of the
overall triangulation the sum of angles is $2\pi$.
The decision problem that we study in this paper is as follows:

\begin{problem}[\problemname{taut angle structure}]
    Given an orientable 3-manifold triangulation $\tri$ with no boundary
    faces, determine whether there exists a taut angle structure on $\tri$.
    We measure the size of the input by the number of tetrahedra in
    $\tri$, which we denote by $n$.
\end{problem}

Our first main theorem is the following:

\begin{theorem}
	\label{thm:np}
    \problemname{taut angle structure} is \NP-complete.
\end{theorem}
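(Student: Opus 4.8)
The plan is to establish the two directions separately: membership in \NP, which is routine, and \NP-hardness, which carries all the content.

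For membership, observe that a taut angle structure is completely specified by choosing, for each tetrahedron of $\tri$, which of its three pairs of opposite edges receives the two $\pi$ angles; this is a certificate of size $O(n)$, and the ``opposite'' requirement holds automatically by the format of the certificate. Given such a certificate one checks in polynomial time that around every edge of $\tri$ the number of incident $\pi$ angles is exactly two, so that the angle sum is $2\pi$. Summing over all tetrahedron--edge incidences is linear in the size of $\tri$, so \problemname{taut angle structure} lies in \NP.

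For hardness I would reduce from a satisfiability problem such as $3$-\textsc{sat}. The idea is to read the taut condition as a constraint satisfaction problem: each tetrahedron carries a ternary variable (which opposite pair is ``hot''), and each edge of $\tri$ imposes the constraint that exactly two incident angles equal $\pi$. Edges of degree two are especially useful, since they force both incident tetrahedra to place $\pi$ there, and can be used as \emph{wires} that propagate a Boolean signal, while edges of higher degree act as logic gates. I would build (i) a variable gadget whose taut angle structures fall into exactly two families, encoding \textsc{true}/\textsc{false}; (ii) fan-out and negation gadgets, so a variable can feed several literals; and (iii) a clause gadget that admits a taut angle structure if and only if at least one incoming literal is satisfied. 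Wiring these together according to a formula $\varphi$, one then shows that the taut angle structures of the assembled triangulation correspond exactly to the satisfying assignments of $\varphi$, and that the number of tetrahedra is linear in the length of $\varphi$.

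The genuinely hard part is topological rather than logical. The input is required to be a closed orientable $3$-manifold triangulation, so the assembled complex must have every vertex link a $2$-sphere, all face identifications orientation-reversing, and no boundary faces; at the same time no unintended low-degree edges may appear, since a stray degree-one edge would make the instance unsatisfiable and a stray degree-two edge would silently force angles. I therefore expect the main obstacle to be the design of gadgets with a standardized, controllable boundary that can be glued to one another (and to a filler region that caps off the complex) without creating non-manifold points or spurious edge constraints, while still realizing the intended Boolean behaviour. Verifying each gadget's taut-structure behaviour amounts to a finite case analysis over the three choices per tetrahedron, but reconciling fan-out and negation with manifoldness and orientability is typically where such a construction must be engineered most carefully.
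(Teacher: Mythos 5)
There is a genuine gap, and it comes before any gadget is built: you have misread the target of the reduction. The problem asks for an orientable triangulation with \emph{no boundary faces}, which is not the same as a closed manifold with sphere vertex links. In fact, as a simple Euler characteristic argument shows, a triangulation without boundary faces can only carry a taut angle structure if every vertex link is a torus or Klein bottle --- i.e.\ the triangulation must be \emph{ideal}. So the constraint you propose to engineer (``every vertex link a $2$-sphere'', plus a ``filler region that caps off the complex'') would make every instance your reduction produces a NO instance: no such triangulation admits any taut angle structure at all, regardless of the formula. The correct design goal is the opposite of what you state: the assembled complex must have all vertex links of Euler characteristic zero, and the gadgets must be glued along surfaces (in the paper, two-triangle tori) so that the cone points / identified vertices become ideal vertices. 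The paper's construction deliberately produces triangulations with many ideal vertices.

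Beyond that, your hardness argument is a plan rather than a proof: the gadgets are the entire mathematical content, and none are constructed or verified. Two further design choices would likely cause trouble if you pursued them. First, reducing from $3$-\textsc{sat} requires a clause gadget realizing ``at least one literal true'' and a negation gadget; but the taut condition is an exact counting constraint (every internal edge marked \emph{exactly} twice), which naturally expresses ``exactly one of three'' rather than ``at least one of three''. This is precisely why the paper reduces from \problemname{monotone 1-in-3 sat}: monotonicity removes the need for negation gadgets, and the 1-in-3 semantics matches the parity of edge markings. Second, you underestimate the difficulty of the fan-out (fork) gadget: the paper's fork gadget needs 21 tetrahedra, was found with substantial computer assistance, and must be engineered so that \emph{only} the two intended boundary patterns propagate --- eliminating unwanted extensions is where the work lies, not in the ``finite case analysis'' of a candidate gadget. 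Your membership-in-\NP\ argument is fine and matches the paper's.
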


We prove this in Section~\ref{sec:np} using a reduction from
the \NP-complete problem \problemname{monotone 1-in-3 sat}
\cite{schaefer78-sat}.  In \problemname{monotone 1-in-3 sat}
we have boolean variables $x_1,\ldots,x_t$ and clauses
of the form $x_i \vee x_j \vee x_k$, and we must determine whether the
variables can be assigned true/false
values so that one and only one of the three variables in each clause is true.

The proof involves an explicit piecewise construction of a 3-manifold
triangulation that represents a given instance of
\problemname{monotone 1-in-3 sat}.
We use three types of building blocks, which represent (i) variables $x_i$;
(ii) the duplication of variables; and (iii) clauses $x_i \vee x_j \vee x_k$.
Finding such building blocks---particularly (ii) and (iii)---was a
major challenge in constructing the proof, and was performed with
significant assistance
from the software package \regina\ \cite{burton04-regina,regina}.

In Section~\ref{sec:fpt} we present additional results on
\emph{parameterised complexity}.
Introduced by Downey and Fellows \cite{downey99-param},
parameterised complexity studies which aspects of an
\NP-complete problem make it difficult, and identifies classes of inputs
for which fast algorithms can nonetheless be found.

Our parameters are based on the \emph{face pairing graph} of the
input triangulation $\tri$ (that is,
the dual 1-skeleton of $\tri$).  Denoted $\fpg{\tri}$, the face pairing
graph is the multigraph whose nodes represent tetrahedra of $\tri$,
and whose
arcs represent pairs of tetrahedron faces that are joined together.

For \problemname{taut angle structure}, we identify two parameters of
interest: the cutwidth of $\fpg{\tri}$, and the treewidth of $\fpg{\tri}$.
We define these concepts precisely in Section~\ref{sec:prelim}, but in essence
the cutwidth measures the worst ``bottleneck'' of parallel arcs in an
optimal left-to-right layout of nodes,
and the treewidth measures how ``tree-like'' the graph is.
Our results are the following:

\begin{theorem}
	\label{thm:cutwidth}
    Let $\tri$ be a 3-manifold triangulation with $n$ tetrahedra,
    where the graph $\fpg{\tri}$ has cutwidth $\leq k$, and for which a
    corresponding layout of nodes is known.
    Then \problemname{taut angle structure} can be solved for $\tri$ in
	$O(nk \cdot 3^{3k/2})$ time.
\end{theorem}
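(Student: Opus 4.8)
The plan is to design a dynamic programming algorithm that sweeps across the nodes of $\fpg{\tri}$ from left to right in the given layout, processing one tetrahedron at a time and tracking only the partial information that can still affect future choices. The key structural insight is that a taut angle structure assigns to each tetrahedron one of exactly three \emph{types} (determined by which opposite pair of edges carries the two $\pi$ angles), so there are $3^n$ total assignments in the naive search space. The edge-sum condition---that the angles around each edge of $\tri$ sum to $2\pi$---is a local linear constraint, and the cutwidth bound is what lets us verify these constraints incrementally without remembering the entire history.

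First I would fix the layout of nodes as an ordering $v_1,\ldots,v_n$ of the tetrahedra. As I sweep left to right, at each position $i$ I consider the \emph{cut} separating $\{v_1,\ldots,v_i\}$ from the remaining tetrahedra; by hypothesis this cut is crossed by at most $k$ arcs of $\fpg{\tri}$. Each such arc corresponds to a face-gluing between an already-processed tetrahedron and a future one, and the relevant unresolved information is precisely the contributions to the edge-angle sums coming from edges that touch these boundary faces. The dynamic programming state at step $i$ would record, for each edge of $\tri$ that is still ``open'' (incident to a tetrahedron not yet fully processed), the partial angle sum accumulated so far. The central counting task is to bound the number of distinct states: since at most $k$ arcs cross the cut, only $O(k)$ edges can be open, and each open edge's partial sum takes one of a bounded number of values, giving roughly $3^{3k/2}$ states after a careful count of how the three tetrahedron types interact with the $O(k)$ boundary faces. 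The transition from state $i$ to state $i{+}1$ tries all three types for $v_{i+1}$, updates the open-edge sums, and \emph{closes} any edge all of whose incident tetrahedra are now processed, rejecting transitions that close an edge with sum $\neq 2\pi$.

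The main obstacle I anticipate is pinning down the exact state space and proving the $3^{3k/2}$ bound rather than a looser bound like $3^{O(k)}$ with a worse constant in the exponent. This requires a precise accounting argument: each of the $k$ boundary faces is a triangle with three edges, but edges are shared between faces and between the two sides of a gluing, so the naive count of $3^{3k}$ (three angle choices per edge of $k$ triangular faces) must be reduced by exploiting the $\pi$-opposite-edge structure of taut tetrahedra and the fact that an edge's eventual sum is determined once all incident tetrahedra are typed. I expect the key lemma to be that the dihedral-angle contributions across a cut of width $k$ are captured by at most $3k/2$ independent ternary quantities, from which both correctness and the running time $O(nk \cdot 3^{3k/2})$---one pass over $n$ tetrahedra, $O(k)$ work to update open edges per transition, over $3^{3k/2}$ states---follow directly.
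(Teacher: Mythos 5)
Your proposal follows essentially the same route as the paper's proof: a left-to-right dynamic programme over the given layout whose state records, for each ``open'' (boundary) edge of the partial triangulation $\tri_i$, its marking count $0$, $1$ or $2$ (i.e.\ the partial angle sum in units of $\pi$), with transitions trying the three taut types of the next tetrahedron and rejecting any combination that over-marks an edge or closes an internal edge with fewer than two marks. The one detail you leave open---that there are at most $3k/2$ open edges, hence at most $3^{3k/2}$ states---is settled in the paper by the purely combinatorial fact that every boundary edge lies on at least two of the $\leq k$ boundary triangles (pinched edges only reduce the count), rather than by the $\pi$-opposite-edge structure of taut tetrahedra that you invoke.
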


\begin{theorem}
	\label{thm:treewidth}
    Let $\tri$ be a 3-manifold triangulation with $n$ tetrahedra,
    where the graph $\fpg{\tri}$ has treewidth $\leq k$, and for which a
    corresponding tree decomposition with $O(n)$ tree nodes is known.
    Then \problemname{taut angle structure} can be solved for $\tri$ in
    $O(nk \cdot 3^{7k})$ time.
\end{theorem}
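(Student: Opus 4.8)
The plan is to give a dynamic programming algorithm over a tree decomposition of $\fpg{\tri}$. I would begin by recalling the standard setup: fix a nice (rooted, binary) tree decomposition with $O(n)$ bags, each of width $\leq k$, so every bag contains at most $k+1$ nodes of $\fpg{\tri}$, i.e. at most $k+1$ tetrahedra of $\tri$. The key observation is that a taut angle structure is specified locally: each tetrahedron receives one of exactly three choices (which opposite pair of edges carries the two $\pi$ angles), and the only global constraints are the edge equations, namely that the angles summed around each edge of $\tri$ equal $2\pi$. Since each edge equation only involves the tetrahedra meeting that edge, and each such tetrahedron sits in some bag, these constraints are amenable to a bag-by-bag propagation.

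First I would set up the state carried by the DP. For a bag $B$, a partial solution assigns one of the three taut choices to each tetrahedron in $B$; this alone gives $3^{k+1}$ states, which is where a factor like $3^{k}$ comes from. The subtlety is the edge equations, because an edge of $\tri$ may be incident to tetrahedra scattered across several bags, so its $2\pi$ constraint cannot be checked within a single bag. To handle this I would augment the state with partial information about each edge that is still ``open'' at the current bag, recording how much angle has already been contributed to that edge by the tetrahedra processed so far (equivalently, how many $\pi$ contributions remain to be accounted for). An edge of $\tri$ is traced through the tree decomposition along the bags containing its incident tetrahedra; by the connectivity property of tree decompositions, the set of bags touching a given edge-incidence forms a connected subtree, so I can introduce each edge's running total when it first appears and finalise (verify it equals $2\pi$) when it last appears.

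The core of the argument is then the transition at the three bag types. At an \emph{introduce} node a new tetrahedron is added: I branch over its three taut choices and update the running angle totals of the incident edges. At a \emph{forget} node a tetrahedron leaves the bag: here I must ensure that any edge all of whose incident tetrahedra have now been processed has its total verified against $2\pi$, discarding states that fail. At a \emph{join} node I combine the tables of the two children, matching on consistent choices for the shared tetrahedra and summing the partial edge contributions. Counting carefully, the number of states per bag is bounded by the $3^{k+1}$ tetrahedron-choices multiplied by a factor controlled by the number of edges active in the bag; since each tetrahedron contributes at most six edges and each active edge carries only a bounded amount of partial information, the total per-bag state count can be bounded by something of the form $3^{ck}$ for a constant $c$, giving the claimed $3^{7k}$ after accounting for the join step (which squares the tetrahedron states). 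Multiplying by the $O(n)$ bags and an $O(k)$ per-transition bookkeeping cost yields the stated $O(nk\cdot 3^{7k})$ running time.

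The main obstacle, I expect, is pinning down exactly what partial edge information must be stored so that (a) it suffices to verify all edge equations correctly, yet (b) it does not blow up the state space beyond the target exponent $3^{7k}$. The difficulty is that an edge of $\tri$ can be incident to many tetrahedra, and its partial angle sum during the sweep could in principle take many values; I would need to argue that only the edges ``straddling'' the current bag separator matter, that the number of such edges is $O(k)$, and that each contributes only a bounded (constant-sized) piece of state — most naturally by observing that each edge's partial contribution is determined by the taut choices of the incident tetrahedra already placed, so it need not be stored as an independent coordinate at all but rather recomputed from the tetrahedron-choice state, keeping the dominant factor at roughly $3^{k+1}$ per child table and hence $3^{7k}$ overall. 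Getting this accounting tight, rather than merely finite, is where the real work lies.
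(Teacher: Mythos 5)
Your overall framework---dynamic programming over the tree decomposition, with three taut choices per tetrahedron and the edge equations as the only global constraints---is the same as the paper's, and you correctly identify the crux: bounding the state needed to track partially-satisfied edge equations. But your proposed resolution of that crux fails. You suggest that the partial contribution to each open edge ``need not be stored as an independent coordinate at all but rather recomputed from the tetrahedron-choice state.'' This cannot work: at a forget node the taut choice of the forgotten tetrahedron is discarded from the DP state, yet that tetrahedron may be incident to edges that still meet unprocessed tetrahedra, so its markings on those edges are neither stored nor recoverable from the choices of the tetrahedra currently in the bag. The partial counts must be carried as explicit state, and the whole difficulty is to show this does not blow up the table size; that is precisely the step you leave unproven (``the number of such edges is $O(k)$'' is asserted, not argued).

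The paper closes this gap with two observations. First, each straddling edge needs only a constant amount of state: its current marking count, capped at $\{0,1,2\}$, since any partial solution marking an edge more than twice is discarded immediately. Second---and this is the real content---the number of straddling edges at a tree node $\nu$ is at most $6(k+1)$. The argument: let $\tri_\nu$ be the sub-triangulation formed by the tetrahedra appearing only in bags within the subtree rooted at $\nu$; every straddling edge lies on a boundary face of $\tri_\nu$, and every such boundary face is a gluing between some $\Delta \in \tri_\nu$ and some $\Delta' \notin \tri_\nu$. Since some bag must contain both endpoints of the corresponding arc of $\fpg{\tri}$, and $\Delta$ occurs only inside the subtree, the connectivity property of tree decompositions forces $\Delta'$ to lie in the bag at the parent of $\nu$. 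Hence there are at most $4(k+1)$ boundary faces, and so at most $6(k+1)$ boundary edges (pinched edges only reduce this count). This bounds the table at each node by $3^{6(k+1)}$ boundary marking patterns, which combined with the $3^{k+1}$ choices for the new tetrahedra gives $O(k \cdot 3^{7k})$ work per node and $O(nk \cdot 3^{7k})$ overall. Without this lemma (or an equivalent), your accounting never reaches the claimed exponent, so as written the proposal has a genuine gap at its central step.
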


Because $\mathrm{treewidth} \leq \mathrm{cutwidth}$
(as shown in \cite{bodlaender86-classes}), the latter result is more powerful.
Moreover, if we fix an upper bound on the treewidth $k$, there is a
known linear-time algorithm to test whether a graph has
treewidth $\leq k$ and, if so, to compute
a corresponding tree decomposition with $O(n)$ tree nodes
\cite{bodlaender96-linear}.
Therefore Theorem~\ref{thm:treewidth} shows that,
in the case of bounded treewidth,
we can solve \problemname{taut angle structure}
in \emph{linear time} in the input size $n$.  That is:

\begin{corollary}
    \label{cor:fpt}
    \problemname{taut angle structure} is linear-time fixed-parameter tractable,
    where the parameter is taken to be the treewidth of the face pairing
    graph of the input triangulation.
\end{corollary}

For 3-manifold triangulations
the treewidth of $\fpg{\tri}$ is a natural parameter, and there
are well-known families of triangulations for which the treewidth
remains small.
Moreover, our fixed-parameter tractability result is consistent with
experimental observations from running other, more complex algorithms
over small-treewidth triangulations.
We discuss these issues further in Section~\ref{sec:conc}.

Throughout this paper we work in the word RAM model, where
simple arithmetical operations on $(\log n)$-bit integers are assumed
to take constant time.

\section{Preliminaries}
\label{sec:prelim}

\subsection{Triangulations}

By a \emph{3-manifold triangulation}, we mean a collection of $n$ abstract
tetrahedra, some or all of whose faces are affinely identified or ``glued
together'' in pairs.  As a consequence of these face gluings, many
tetrahedron edges may become identified together; we refer to the result as
a single \emph{edge of the triangulation}, and likewise with vertices.

This is a purely combinatorial definition: there are no geometric constraints
(such as embeddability in some $\R^d$), and the result need not be a
simplicial complex.  We may glue together two faces of the same
tetrahedron if we like.  A single edge of the triangulation might appear
as multiple edges of the same tetrahedron, and likewise with vertices.
It is common to work with \emph{one-vertex triangulations}, where all
vertices of all tetrahedra become identified as a single point.

The only constraints are the following.  Each tetrahedron face must be
identified with one and only one partner (we call these
\emph{internal faces}), or with nothing at all (we call these
\emph{boundary faces}).
Moreover, no edge may be identified with itself in reverse as a result
of the face gluings.
Any edge on a boundary face is called a \emph{boundary edge},
and all others are called \emph{internal edges}.

The \emph{link} of a vertex $V$ of the triangulation is the frontier of
a small regular neighbourhood of $V$.  If the link of $V$ is a closed
surface but not a sphere, we call $V$ an \emph{ideal vertex}.
Any triangulation with one or more ideal vertices
is called an \emph{ideal triangulation}.

Although the neighbourhood of an ideal vertex is not locally $\R^3$
(and so ideal triangulations do not represent 3-manifolds per se),
topologists often use ideal triangulations as an economical way
to represent 3-manifolds with boundary (obtained by truncating the
ideal vertices)
or non-compact 3-manifolds (obtained by deleting the ideal vertices).
Because of this, ideal triangulations are ubiquitous in the study of
hyperbolic 3-manifolds.

\begin{figure}[htb]
    \centering
    \subfigure[]{\label{fig:fig8}%
        \includegraphics[width=.41\textwidth]{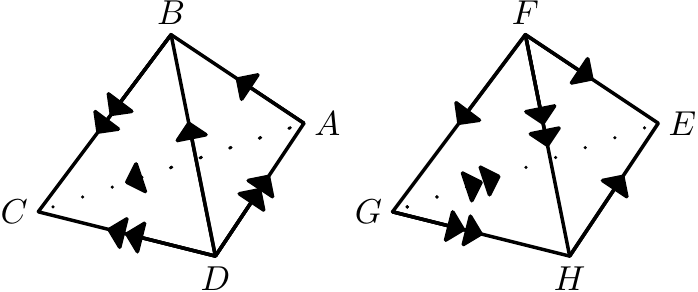}}
    \hspace{1.5cm}
    \subfigure[]{\label{fig:fpg}%
        \includegraphics[width=.21\textwidth]{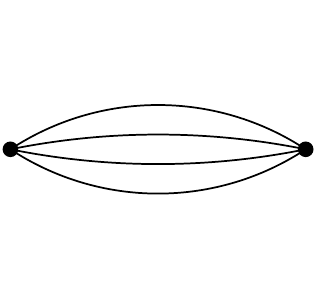}}
    \caption{The figure eight knot complement and its face pairing graph}
\end{figure}

Figure~\ref{fig:fig8} illustrates Thurston's famous ideal triangulation
of the figure eight knot complement \cite{thurston78-lectures}.
There are $n=2$ tetrahedra, labelled
$\mathit{ABCD}$ and $\mathit{EFGH}$, with the following face gluings:
\[
    \mathit{ABC} \longleftrightarrow \mathit{FGE}; \qquad
    \mathit{ABD} \longleftrightarrow \mathit{HEF}; \qquad
    \mathit{ACD} \longleftrightarrow \mathit{HEG}; \qquad
    \mathit{BCD} \longleftrightarrow \mathit{GHF}.
\]
As a consequence of these face gluings, we obtain two edges of the
triangulation, indicated by the two types of arrowhead in the diagram.
All vertices of all tetrahedra become identified as a single
ideal vertex of the triangulation, whose link is a torus.
It can be shown that truncating this vertex does indeed yield the
figure eight knot complement (i.e., the 3-manifold with torus boundary
obtained by deleting a small neighbourhood of the figure eight knot
from the 3-sphere).%
    \footnote{To highlight the efficiency of ideal triangulations:
    the smallest known \emph{non-ideal} triangulation of the figure eight knot
    complement (using boundary faces instead of an ideal vertex)
    requires $n=10$ tetrahedra.}

The \emph{size} of a triangulation is measured by
the number of tetrahedra $n$.  To input a triangulation, one
presents the list of face gluings (as illustrated above), which
requires $O(n \log n)$ bits.

\subsection{Taut angle structures}

Let $\tri$ be a 3-manifold triangulation with no boundary faces.
A \emph{taut angle structure} on $\tri$ assigns interior angles
$\{0,0,0,0,\pi,\pi\}$ to the six edges of each tetrahedron of $\tri$,
so that the two $\pi$ angles are
opposite in each tetrahedron, and so that around each edge of the
triangulation the sum of angles is $2\pi$.
Geometrically, a taut structure shows how the tetrahedra can be consistently
``flattened'' throughout the triangulation.
Here we use the nomenclature of
Hodgson et~al.\ \cite{hodgson11-veering}---our taut angle structures
are slightly more general than
the original taut structures of Lackenby \cite{lackenby00-taut}, who also
requires consistent coorientations on the 2-faces of the triangulation.

To illustrate, we can place a taut angle structure on
Figure~\ref{fig:fig8} by assigning $\pi$ to the opposite edges $\mathit{AC}$
and $\mathit{BD}$ of the first tetrahedron, and to the opposite edges
$\mathit{EG}$ and $\mathit{FH}$ of the second tetrahedron.  It is easily
seen that both edges of the triangulation (the single arrowheads versus the
double arrowheads) receive the angle $\pi$ exactly twice each.

We refer to the two $\pi$ edges in each tetrahedron as \emph{marked}.
Combinatorially, a taut structure simply involves choosing two opposite
edges of each tetrahedron to mark, in such a way that every edge of the
triangulation is marked exactly twice.

A simple Euler characteristic calculation shows that, in a triangulation
with no boundary faces, a taut angle structure can only exist if every
vertex link is a torus or a Klein bottle.  That is, taut angle
structures require \emph{ideal triangulations}.

Here we generalise this definition to support triangulations with boundary
(which become important as we piece together triangulations for our
\NP-completeness proof).
If $\tri$ is any 3-manifold triangulation (with or without boundary
faces), then a taut angle structure on $\tri$ involves choosing two opposite
edges of each tetrahedron to mark,
so that every internal edge of the triangulation is marked
\emph{exactly} twice, and every boundary edge of the
triangulation is marked \emph{at most} twice.

Let $\tri$ and $\tri'$ be 3-manifold triangulations for which
$\tri$ is a subcomplex of $\tri'$
(i.e., $\tri'$ is obtained from $\tri$
by adding new tetrahedra and/or additional face gluings).
If $\tau$ and $\tau'$ are taut angle structures on $\tri$ and $\tri'$
respectively, we say that $\tau'$ \emph{extends} $\tau$ if they both
assign the same interior angles to the tetrahedra from $\tri$
(i.e., the tetrahedra that belong to both triangulations).

\subsection{Face pairing graphs}

The \emph{face pairing graph} of a 3-manifold triangulation $\tri$,
denoted $\fpg{\tri}$,
is the multigraph whose nodes represent tetrahedra, and whose arcs
represent pairs of tetrahedron faces that are glued together.
A face pairing graph may contain loops (if two faces of the same
tetrahedron are glued together), and/or multiple edges (if two tetrahedra
are joined together along more than one face).

If every face of $\tri$ is internal, then $\fpg{\tri}$ is a 4-valent
graph.  Figure~\ref{fig:fpg} shows the face pairing graph
of the figure eight knot complement as presented in Figure~\ref{fig:fig8}.

In our parameterised complexity analysis, we measure both the cutwidth and
the treewidth of $\fpg{\tri}$.  These concepts are defined as follows
\cite{downey99-param,korach93-width}:

\begin{defn}[Cutwidth]
    A {\em cut} of a graph $G$ is a partition of its nodes
    into two disjoint subsets $N_1$ and $N_2$. The set of arcs
    with one endpoint in $N_1$ and the other in $N_2$ is called the
    {\em cutset}, and the number of arcs in the cutset is referred to as
    the {\em width} of the cut $(N_1,N_2)$.

    The {\em cutwidth} of $G$ is the smallest $k$ for which
    there exists an ordering (or \emph{layout})
    $\nu_1 , \ldots , \nu_n$ of the nodes of $G$
    such that the width of every cut
    $(\{\nu_{1} , \ldots \nu_{i} \},\{\nu_{i+1} , \ldots \nu_{n} \})$
    is at most $k$.
\end{defn}

\begin{defn}[Treewidth]
    A {\em tree decomposition} of a graph $G$ is a tree $T$ and a
    collection of \emph{bags} $\{X_i\,|\,\mbox{$i$ is a node of $T$}\}$.
    Each bag $X_i$ is a subset of nodes of $G$, and we require:
    (i)~every node of $G$ is contained in at least one bag $X_i$;
    (ii)~for each edge of $G$, some bag $X_i$ contains both its endpoints;
    and
    (iii)~for all nodes $i,j,k$ of $T$, if $j$ lies on the unique path from
    $i$ to $k$ in $T$, then $X_i \cap X_k \subseteq X_j$.

    The \emph{width} of a tree decomposition is defined as $\max |X_i| - 1$,
    and the {\em treewidth} of $G$ is the minimum width over all tree
    decompositions.
\end{defn}

In essence, cutwidth measures the worst ``bottleneck'' of parallel
arcs in an optimal left-to-right layout of nodes that is chosen to make
this bottleneck as small as possible, and
treewidth measures how far $G$ is from being a tree
(in particular, a tree always has treewidth 1).
Bodlaender shows that
$\mathrm{cutwidth} \geq \mathrm{treewidth}$
\cite{bodlaender86-classes}; 
on the other hand, there are graphs with bounded treewidth and arbitrarily
large cutwidth, and so these two parameters measure genuinely different
features.

Computing cutwidth and treewidth are both
NP-complete \cite{arnborg87-embeddings,garey79-intractability}.
However, for fixed $k$ it can be decided in linear time whether a given
graph has cutwidth $\leq k$ and/or
treewidth $\leq k$ \cite{bodlaender96-linear,thilikos05-cutwidth}.

\section{NP-completeness}
\label{sec:np}

In this section we prove Theorem~\ref{thm:np}, i.e., that
\problemname{taut angle structure} is \NP-complete.
As stated earlier, we do this using a reduction from the
\NP-complete problem \problemname{monotone 1-in-3-sat}
\cite{schaefer78-sat}.

The overall structure of the proof is as follows.
Throughout this section, let $\mathcal{M}$ be a given instance of
\problemname{monotone 1-in-3 sat}, with $t$ variables
$x_1,\ldots,x_t$, and $c$ clauses each of the form $x_i \vee x_j \vee x_k$.
We say that $\ms$ is \emph{solvable} if and only if there is some
assignment of true/false values to the variables so that exactly one of the
three terms in each clause is true.

Our strategy is to build a corresponding triangulation $\tri_\ms$
that has a taut angle structure if and only if $\mathcal{M}$ is solvable.
We build $\tri_\ms$ by hooking together three types of gadgets,
all of which are triangulations with boundary faces:
(i)~\emph{variable gadgets}, each with two choices
of taut angle structure that represent true or false respectively
for a single variable $x_i$ of $\mathcal{M}$;
(ii)~\emph{fork gadgets} that allow us to propagate this choice
for $x_i$ to several clauses simultaneously; and
(iii)~\emph{clause gadgets} that connect three variable gadgets
and support
an overall taut angle structure if and only if precisely one of the
three corresponding variable choices is true.

These gadgets have 2, 21 and 4 tetrahedra respectively, and we describe
and analyse them in Sections~\ref{sec:variable}, \ref{sec:fork} and
\ref{sec:clause}.  We then finish off the proof of
Theorem~\ref{thm:np} in Section~\ref{sec:npproof}, which is
a simple matter of hooking the gadgets together and observing that
the entire construction can be done in polynomial time.

We hook the gadgets together along tori: each such torus consists of two
faces, three edges and one vertex.  To facilitate lemmas and proofs,
we assign \emph{types} $a$, $b$ and $c$ to
the three edges of each such torus $\torus$,
as illustrated in Figure~\ref{fig:twoFaceTorusBdry} (we explicitly
describe these edge types for each gadget).

\begin{figure}[htb]
    \centering
    \includegraphics[width=0.15\textwidth]{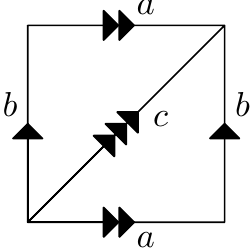}
    \caption{A two-face torus $\torus$ with edge types $a$, $b$ and $c$}
    \label{fig:twoFaceTorusBdry}
\end{figure}

For any taut angle structure $\tau$, the \emph{boundary pattern} of $\tau$ on
the torus $\torus$ is the triple $(m_a,m_b,m_c)$, where $m_a$, $m_b$ and
$m_c$ count the number of markings on
the edges of type $a$, $b$ and $c$
respectively.  By definition of a taut angle structure,
each of $m_a,m_b,m_c \in \{0,1,2\}$.
We use boundary patterns to represent true/false
values of variables in $\ms$: in particular, the boundary pattern
$(2,0,0)$ represents \emph{true}, and the boundary pattern
$(0,2,0)$ represents \emph{false}.

\subsection{The variable gadget}
\label{sec:variable}

A variable gadget is a triangulation with torus boundary that
has precisely two taut angle structures: one with boundary pattern
$(2,0,0)$ (representing \emph{true}),
and the other with boundary pattern $(0,2,0)$
(representing \emph{false}).
We first define the gadget, and then prove the necessary properties.

The construction is simple: we use a $(1,3,4)$ \emph{layered solid torus},
a two-tetrahedron instance of a more general and much-studied family
of solid torus triangulations \cite{jaco03-0-efficiency}.
The details are as follows.

\begin{construction}[Variable gadget]
    To build a variable gadget, we begin with the tetrahedron
    $\Delta_1$ whose vertices are labelled $A,B,C,D$, and we identify
    faces $\mathit{ABD}$ and $\mathit{BDC}$ (the rear faces in the diagram).
    This is the well-known one-tetrahedron triangulation of the solid
    torus \cite{burton04-facegraphs,jaco03-0-efficiency},
    and has three boundary edges
    $\mathit{AB}=\mathit{BD}=\mathit{DC}$; $\mathit{AD}=\mathit{BC}$;
    and $\mathit{CA}$, as illustrated in Figure~\ref{fig:var1}.

    \begin{figure}[htb]
        \centering
        \subfigure[]{\label{fig:var1}%
            \includegraphics[width=.21\textwidth]{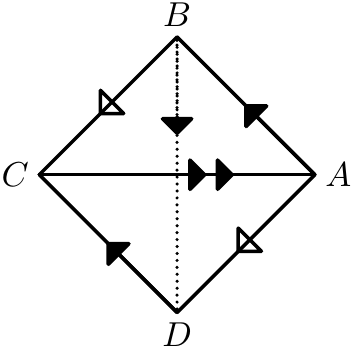}}
        \hspace{1.5cm}
        \subfigure[]{\label{fig:varlayer}%
            \includegraphics[width=.54\textwidth]{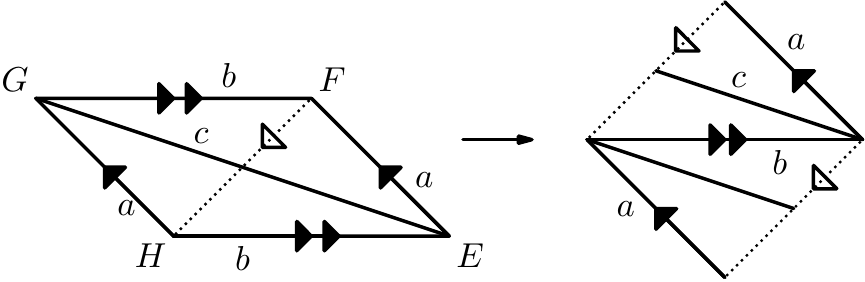}}
        \caption{Building a variable gadget}
    \end{figure}

    We now take a second tetrahedron $\Delta_2$ with vertices labelled
    $E,F,G,H$, and glue the remaining boundary faces of $\Delta_1$ to $\Delta_2$
    by identifying $\mathit{ABC}$ with $\mathit{EFH}$ and $\mathit{ACD}$ with
    $\mathit{FGH}$ as indicated in Figure~\ref{fig:varlayer}.
    This is the two-tetrahedron $(1,3,4)$ layered solid torus,
    with three boundary edges and one internal edge.
    On the boundary we assign edge types
    $a \to \mathit{AB} = \mathit{BD} = \mathit{DC} = \mathit{EF} = \mathit{HG}$;
    $b \to \mathit{CA} = \mathit{HE} = \mathit{GF}$; and
    $c \to \mathit{EG}$.
    For completeness, the one internal edge is
    $\mathit{AD} = \mathit{BC} = \mathit{FH}$.
\end{construction}

\begin{observation} \label{obs:lst}
    The variable gadget is a layered solid torus,
    as described in \cite{jaco03-0-efficiency}.
    In particular, it has just one vertex, whose link is a disc,
    and its two boundary faces join together to form a torus.
\end{observation}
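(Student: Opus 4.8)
The plan is to verify Observation~\ref{obs:lst} directly from the explicit construction, since the statement is really a collection of concrete combinatorial facts about the two-tetrahedron complex just described, together with an appeal to the known theory of layered solid tori in~\cite{jaco03-0-efficiency}. First I would confirm that the object is indeed a $(1,3,4)$ layered solid torus. The one-tetrahedron complex obtained from $\Delta_1$ by gluing $\mathit{ABD}$ to $\mathit{BDC}$ is the standard one-tetrahedron solid torus, whose boundary is a two-triangle torus with three edges; attaching $\Delta_2$ across two of the three boundary faces is precisely a single layering move. Hence the result is a layered solid torus by definition, and checking which boundary slopes arise identifies it as the $(1,3,4)$ case. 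For the purposes of this paper I could either cite the parameters directly from~\cite{jaco03-0-efficiency} or simply note that any layering of the one-tetrahedron solid torus is again a solid torus, which is all the later arguments actually use.

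Next I would verify the vertex and edge count by tracing through the identifications. I would collect the edge equivalence classes induced by all the face gluings: the rear gluing $\mathit{ABD}\leftrightarrow\mathit{BDC}$ together with the two layering gluings $\mathit{ABC}\leftrightarrow\mathit{EFH}$ and $\mathit{ACD}\leftrightarrow\mathit{FGH}$. The construction already records the resulting classes—the three boundary edges of types $a$, $b$, $c$ and the single internal edge $\mathit{AD}=\mathit{BC}=\mathit{FH}$—so the task reduces to checking that these identifications are consistent and exhaustive (i.e. that every one of the twelve tetrahedron edges lands in exactly one of these four classes, and that no edge is identified with itself in reverse). Then, chasing the vertex identifications through the same gluings shows that all eight vertices $A,\dots,H$ collapse to a single vertex of the triangulation.

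The claim about the vertex link is the one genuinely geometric assertion, and I expect it to be the main point requiring care. The link of the single vertex is assembled from the corner triangles of the two tetrahedra, glued according to the face and edge identifications; I would argue that because the solid torus is a manifold-with-boundary and the vertex sits on that boundary, its link is a disc rather than a closed surface (in contrast to the torus link one sees for the internal vertex of an ideal triangulation). An Euler-characteristic computation on the assembled link corroborates this: with one vertex, four edges, and two tetrahedra contributing their corner triangles, the link is a surface with nonempty boundary and $\chi=1$, forcing it to be a disc. Finally, the statement that the two boundary faces join to form a torus is immediate from the boundary edge pattern: the two free faces share all three boundary edges ($a$, $b$, $c$) in the pattern of a two-triangle torus, matching Figure~\ref{fig:twoFaceTorusBdry}, and since the single vertex has disc link the boundary surface is closed with $\chi=0$ and hence a torus. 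The only subtlety worth double-checking is the orientability/self-reverse condition on edges, but the explicit edge classes listed in the construction make this routine to confirm.
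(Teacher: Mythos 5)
Your proposal is correct in substance, but it takes a genuinely different route from the paper. The paper offers no hands-on verification at all: it simply remarks that these properties hold for \emph{any} layered solid torus, citing the general theory \cite{burton04-facegraphs,jaco03-0-efficiency}, and notes that the claims can also be checked computationally with \regina. You instead verify everything directly from the gluings: recognising the attachment of $\Delta_2$ as a single layering move on the one-tetrahedron solid torus, tracing the edge classes ($a$, $b$, $c$ and the internal edge $\mathit{AD}=\mathit{BC}=\mathit{FH}$), collapsing all eight vertex labels to a single vertex, computing the Euler characteristic of the link ($8$ link vertices, $15$ link edges, $8$ link triangles, so $\chi = 1$), and reading off the two-triangle boundary torus. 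What your approach buys is self-containedness: the reader need not trust software output or unwind the layered-solid-torus literature. What the paper's approach buys is brevity and generality -- the citation covers all layered solid tori at once, which is precisely why the authors state this as an Observation rather than proving it as a Lemma.

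If you carry out your plan, three small points need tightening. First, a compact surface with nonempty boundary and $\chi = 1$ is a disc only if it is \emph{connected} (a disjoint union of a disc and a torus also has $\chi = 1$); connectedness is immediate here because the two tetrahedra are glued to one another along internal faces, but it must be said. Second, a closed surface assembled from two triangles, three edges and one vertex has $\chi = 0$ and could a priori be a Klein bottle; to conclude it is a torus you need either orientability of the solid torus or a direct check that the gluing pattern is that of Figure~\ref{fig:twoFaceTorusBdry}. Third, the one-tetrahedron solid torus has \emph{two} boundary faces and \emph{three} boundary edges; your phrase ``two of the three boundary faces'' conflates these, although the operation you describe -- gluing $\Delta_2$ across both boundary faces, layering over the boundary edge $\mathit{AD}=\mathit{BC}$ -- is indeed the correct one.
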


These properties are true of any layered solid torus; see
\cite{burton04-facegraphs,jaco03-0-efficiency} for
more information on the general layered solid torus
construction (the details of which are not important here).
All of the claims above can also be verified computationally using the
software package {\regina} \cite{burton04-regina,regina}.

\begin{lemma} \label{l:variable}
    The variable gadget supports precisely two taut angle
    structures: one with boundary pattern $(2, 0, 0)$, and one with
    boundary pattern $(0, 2, 0)$.
\end{lemma}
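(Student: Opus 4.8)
The plan is to treat this as a small finite constraint-satisfaction problem and simply enumerate the possibilities, exploiting the fact that the variable gadget has only two tetrahedra. Since a taut angle structure consists of choosing two opposite edges to mark in each tetrahedron, each of $\Delta_1$ and $\Delta_2$ has exactly three possible markings (corresponding to the three pairs of opposite edges), giving at most $3 \times 3 = 9$ candidate configurations to examine. I would set up coordinates by fixing the three pairs of opposite edges in each tetrahedron explicitly from the vertex labels: in $\Delta_1 = \mathit{ABCD}$ the opposite pairs are $\{\mathit{AB},\mathit{CD}\}$, $\{\mathit{AC},\mathit{BD}\}$, $\{\mathit{AD},\mathit{BC}\}$, and similarly for $\Delta_2 = \mathit{EFGH}$.

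**First** I would tabulate, for each of the nine candidate choices, which of the four edges of the triangulation (the three boundary edges of types $a$, $b$, $c$, and the single internal edge $\mathit{AD}=\mathit{BC}=\mathit{FH}$) receive a marking, and with what multiplicity. Using the identifications from the \emph{Construction}, each tetrahedron edge maps to one of these four triangulation edges; for instance marking the pair $\{\mathit{AC},\mathit{BD}\}$ in $\Delta_1$ contributes one marking to the type-$b$ edge (since $\mathit{CA}$ has type $b$) and one marking to the type-$a$ edge (since $\mathit{BD}$ has type $a$). The defining constraint is that the internal edge must be marked \emph{exactly} twice, while each boundary edge is marked \emph{at most} twice. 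The internal-edge constraint is the decisive filter: it immediately rules out most of the nine configurations, because marking the internal edge exactly twice forces specific opposite-edge choices in both tetrahedra simultaneously.

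**The main obstacle**, such as it is, lies in being careful with the edge identifications rather than in any conceptual difficulty: one must correctly track how the six edges of each abstract tetrahedron are glued down to the four triangulation edges, since a single triangulation edge (e.g.\ the type-$a$ edge $\mathit{AB}=\mathit{BD}=\mathit{DC}=\mathit{EF}=\mathit{HG}$) absorbs several distinct tetrahedron edges and can receive markings from either tetrahedron. A bookkeeping error here would produce a wrong count, so I would verify the identifications against Figure~\ref{fig:varlayer} and, as the excerpt suggests, cross-check the final tally with \regina.

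**Finally**, after the enumeration I expect exactly two surviving configurations: one whose boundary pattern is $(2,0,0)$ and one whose boundary pattern is $(0,2,0)$, with all remaining configurations eliminated either because the internal edge fails to be marked exactly twice or because some boundary edge is marked more than twice. I would confirm that these two survivors do satisfy all constraints (internal edge marked exactly twice, both boundary markings of the surviving edge type landing correctly), thereby establishing that the variable gadget supports precisely these two taut angle structures and no others, which is the claim of Lemma~\ref{l:variable}.
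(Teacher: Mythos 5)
Your proposal is correct and is essentially the paper's own argument: the paper likewise keys everything on the internal edge $\mathit{AD}=\mathit{BC}=\mathit{FH}$, noting that marking it exactly twice forces the pair $\{\mathit{AD},\mathit{BC}\}$ in $\Delta_1$ and forbids $\mathit{FH}$ in $\Delta_2$, leaving exactly the two survivors with boundary patterns $(2,0,0)$ and $(0,2,0)$. Your version merely presents this as a full $3\times 3$ enumeration rather than as a forced-choice argument, but the decisive filter and the conclusion are identical.
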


We give a theoretical proof here;
however, again this is easy to verify computationally
using \regina, which can enumerate
all taut angle structures on a given triangulation.

\begin{proof}
    Consider the internal edge $\mathit{AD}=\mathit{BC}=\mathit{FH}$.
    Since edges $\mathit{AD}$ and $\mathit{BD}$ are opposite in
    tetrahedron $\Delta_1$, the only way to mark this internal edge
    \emph{twice} is to mark edges $\mathit{AD}$ and $\mathit{BC}$ of
    tetrahedron $\Delta_1$, and to not mark edge $\mathit{FH}$
    in tetrahedron $\Delta_2$.

    Therefore our choice for $\Delta_1$ is forced, and we are left with
    two options for what to mark in $\Delta_2$.
    We could either mark $\mathit{EF}$ and $\mathit{HG}$, which
    yields a taut angle structure with boundary pattern $(2,0,0)$,
    or we could mark $\mathit{EH}$ and $\mathit{FG}$,
    which yields a taut angle structure with boundary pattern $(0,2,0)$.
\end{proof}

\subsection{The fork gadget}
\label{sec:fork}

A fork gadget is a triangulation
that allows us to duplicate a variable $x_i$ from our
\problemname{monotone 1-in-3 sat} instance $\ms$.
Specifically, we can attach a fork gadget to some boundary torus $\torus$
of some triangulation $\tri$ with a taut angle structure $\tau$;
as a result it produces two new boundary tori that both inherit the
\emph{same} boundary pattern with which $\tau$ meets $\torus$.
The details are as follows.

\begin{construction}[Fork gadget]
    To build a fork gadget, we begin with an annular prism; that is,
    the prism over a disc with a hole cut out of the centre, as
    illustrated in Figure~\ref{fig:forkprism}.
    We triangulate this prism with 21 tetrahedra: the precise triangulation is
    important, and is spelled out explicitly in the appendix.
    As a consequence, this triangulates the outer cylinder with four triangles
    and four vertices $A,B,C,D$, and triangulates the inner cylinder with
    two triangles and two vertices $E,F$.

    We then glue the top of the prism to the bottom, effectively creating a
    ``hollow'' solid torus; that is, a manifold with a torus boundary component
    on the outside and another torus boundary component on the inside.
    We assign edge types $a,b,c$ to the three edges of the inner torus, and
    also to the six edges of the outer torus;
    the precise labellings are shown in Figure~\ref{fig:forkglue}.

    \begin{figure}[htb]
        \centering
        \subfigure[]{\label{fig:forkprism}%
            \includegraphics[width=.25\textwidth]{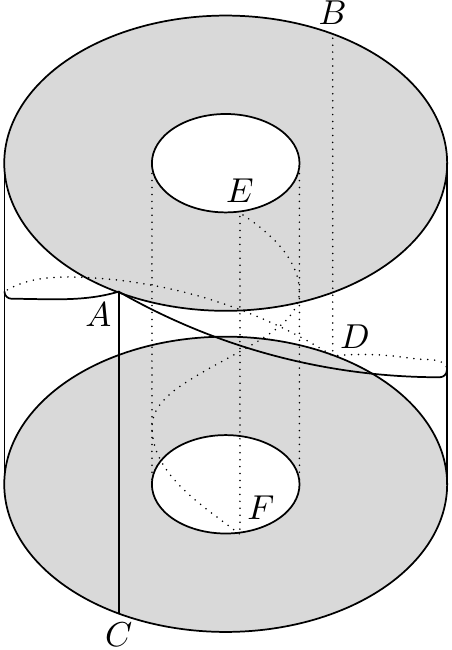}}
        \hspace{2cm}
        \subfigure[]{\label{fig:forkglue}%
            \includegraphics[width=.44\textwidth]{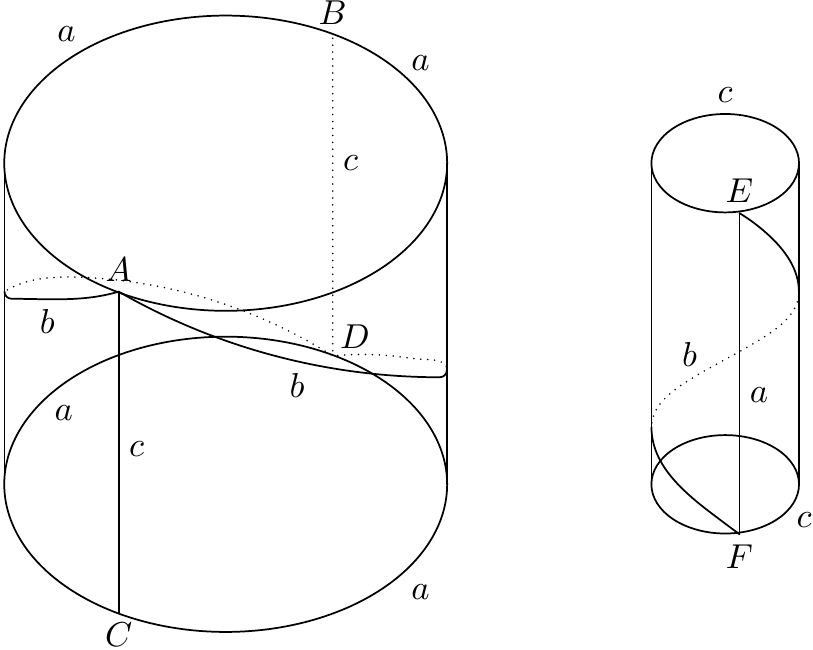}}
        \caption{Building a fork gadget}
    \end{figure}
\end{construction}

\begin{lemma} \label{l:fork}
    Let $\tri$ be a 3-manifold triangulation, two of whose
    boundary faces form a two-triangle torus $\torus$
    with the usual $a,b,c$ edge types.
    Let $\tri'$ be the new triangulation obtained by attaching a fork
    gadget to $\torus$ along the right-hand side of the
    outer torus of the fork gadget,
    as illustrated in Figure~\ref{fig:forkattach},
    so that the edge types $a,b,c$ match.
    Then $\tri'$ is a 3-manifold triangulation with four new boundary
    faces that form two disjoint two-triangle tori $\torus',\torus''$, as
    illustrated in Figure~\ref{fig:forkattachnew}.

    \begin{figure}[htb]
        \centering
        \subfigure[]{\label{fig:forkattach}%
            \includegraphics[width=.42\textwidth]{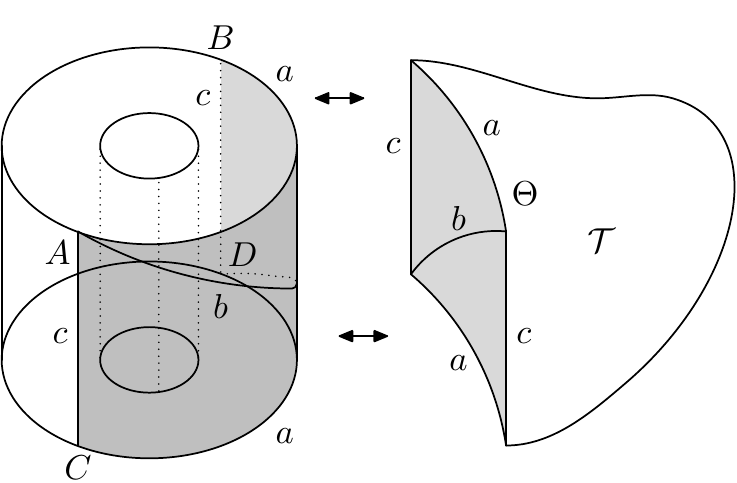}}
        \hspace{1cm}
        \subfigure[]{\label{fig:forkattachnew}%
            \includegraphics[width=.38\textwidth]{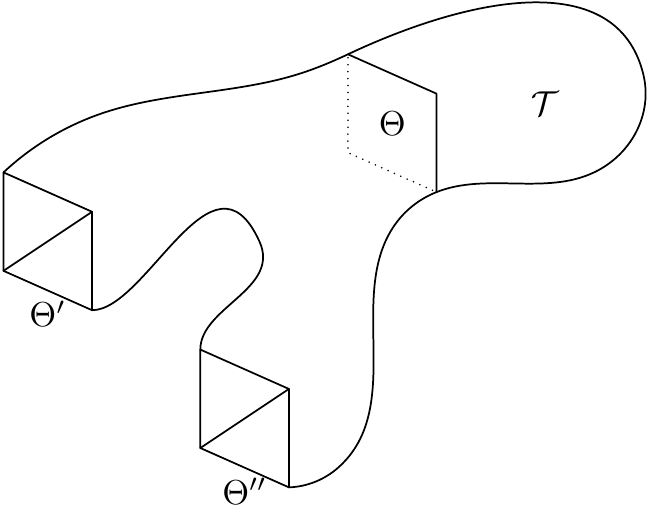}}
        \caption{Attaching a fork gadget to the 3-manifold triangulation $\tri$}
    \end{figure}

    Let $\tau$ be a taut angle structure on $\tri$ that meets the original
    boundary torus $\torus$ in one of the patterns $(2,0,0)$ or $(0,2,0)$.
    Then we can extend $\tau$ through the fork gadget to obtain a
    taut angle structure $\tau'$ on $\tri'$.  Moreover, every such extension
    $\tau'$ meets the new boundary tori $\torus',\torus''$ in the same
    boundary pattern with which $\tau$ meets $\torus$.
\end{lemma}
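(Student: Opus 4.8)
The plan is to verify Lemma~\ref{l:fork} by a combination of combinatorial topology bookkeeping (for the structural claim about $\tri'$) and an essentially finite case analysis over markings in the 21-tetrahedron fork gadget (for the claim about extending taut angle structures). Since the fork gadget is given by an explicit triangulation in the appendix, the whole statement is in principle a finite check that could be performed in \regina; my theoretical argument will aim to reproduce and explain that check.

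First I would dispose of the structural claim. Attaching the fork gadget to $\torus$ glues the two boundary faces of $\torus$ to the two right-hand outer faces of the fork gadget, matching the $a,b,c$ edge types. Since $\torus$ has one vertex, three edges and two faces, and the right-hand side of the outer torus has the same data, the identification is a face gluing of triangulations and the result $\tri'$ is again a valid 3-manifold triangulation (one must check that no edge is identified with itself in reverse, which follows from the matching of edge orientations implicit in the $a,b,c$ labels). The fork gadget was built as a hollow solid torus with an outer and an inner torus boundary component, each split into left/right halves; gluing away the right outer half leaves exactly the two remaining two-triangle tori $\torus'$ (say the left outer half) and $\torus''$ (the inner torus), giving the four new boundary faces claimed. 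This part is routine once the appendix gluings are fixed.

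The substantive claim is the extension property, and here the key step is to show that \emph{every} marking of the 21 fork tetrahedra that is taut on all internal edges and compatible with the incoming pattern on $\torus$ is forced to carry that same pattern out to both $\torus'$ and $\torus''$. The strategy is to mimic the proof of Lemma~\ref{l:variable}: I would locate the internal edges of the fork gadget and use the ``an internal edge must be marked exactly twice, and opposite edges of a tetrahedron cannot both be marked'' principle to propagate forced choices. Concretely, I would treat the incoming boundary pattern (either $(2,0,0)$ or $(0,2,0)$) as fixing the markings on the two glued faces of $\torus$, then chase the forcing through the gadget edge by edge, exhibiting at each internal edge which of the adjacent tetrahedron markings are ruled out, until the markings on the edges of $\torus'$ and $\torus''$ are determined. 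To prove existence of at least one extension, it suffices to display the explicit propagated marking and verify that every internal edge of the fork (and every internal edge newly created by the attachment along $\torus$) is marked exactly twice; to prove uniqueness of the outgoing pattern, I verify the forcing chain leaves no free choice on the $a,b,c$ edges of $\torus'$ and $\torus''$.

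The hard part will be managing the forcing chase through $21$ tetrahedra without error: unlike the two-tetrahedron variable gadget, the fork gadget has many internal edges and the ``exactly twice'' constraints interact in a long cascade, so the main obstacle is organising the edge-identification data from the appendix into a tractable dependency order rather than any single deep idea. In practice I would structure the argument by grouping tetrahedra according to which boundary component they border, handle the two incoming patterns $(2,0,0)$ and $(0,2,0)$ by the obvious symmetry swapping the roles of the $a$- and $b$-type edges (so that only one case need be written out in full), and lean on \regina's enumeration as an independent confirmation that the fork gadget admits no taut angle structure whose outgoing pattern differs from its incoming one.
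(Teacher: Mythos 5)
Your structural argument (checking that no edge is glued to itself in reverse, and that the leftover faces form the two tori $\torus'$, $\torus''$) matches the paper, though note one imprecision: before attachment the right-hand side of the outer cylinder is \emph{not} combinatorially identical to $\torus$ --- it has four edges, since the two vertical edges $\mathit{AC}$ and $\mathit{BD}$ are distinct in the fork gadget and only become identified \emph{as a result} of the gluing; this identification is exactly the point the paper's proof singles out. The substantive divergence, and the genuine gap, is in the extension property. The paper makes no attempt at a forcing chase: the authors state explicitly that a theoretical analysis of the 21-tetrahedron gadget would be onerous, and instead use \regina\ to enumerate \emph{all} taut angle structures on the fork gadget --- there are exactly four, $\tau_1,\tau_2,\tau_3,\tau_4$ --- after which the lemma is a short table lookup: the incoming pattern $(2,0,0)$ forces $\tau_3$ and $(0,2,0)$ forces $\tau_2$, and the outgoing patterns are read off. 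Your plan replaces this global enumeration by hand propagation of the local constraints starting from only three boundary conditions on $\torus$. You assert that this propagation ``leaves no free choice,'' but that is precisely what needs proof: with so few constraints entering a 21-tetrahedron complex, unit propagation may well stall, and ruling out the remaining assignments would then require branching --- in effect a by-hand re-derivation of the enumeration the authors deliberately delegated to software. Indeed, the existence of the two further taut structures $\tau_1,\tau_4$ (whose boundary markings include $1$'s and are excluded only by the parity of the incoming pattern, not by local forcing from the gadget's interior) shows the gadget is not rigid in the way a naive analogy with the two-tetrahedron variable gadget of Lemma~\ref{l:variable} suggests. Your proposal defers the entire substantive content of the lemma to this unverified claim.

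The second concrete gap is your reduction of the $(0,2,0)$ case to the $(2,0,0)$ case via ``the obvious symmetry swapping the roles of the $a$- and $b$-type edges.'' The marking data in the paper's Table~\ref{tab:taut} do happen to be invariant under exchanging $\tau_2 \leftrightarrow \tau_3$ together with $a \leftrightarrow b$, but that is a symmetry of the \emph{answer}, not an exhibited symmetry of the \emph{triangulation}. To argue as you propose, you would need an automorphism of the 21-tetrahedron gadget inducing the $a \leftrightarrow b$ swap simultaneously on the inner torus and both halves of the outer torus; no such automorphism is claimed in the paper, it is far from obvious from the gluing table in the appendix, and the gadget was hand-tailored in a way that gives no reason to expect it. The paper instead handles the second case by a separate, parallel argument from the same enumeration. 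Finally, your fallback --- ``lean on \regina's enumeration as an independent confirmation'' --- is not a confirmation of your argument but a replacement for it; once you invoke the enumeration as the source of truth, your proof collapses into the paper's own.
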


In other words: a fork gadget allows us to duplicate a boundary torus in a
way that also \emph{duplicates the boundary patterns} of taut angle structures.
Because the fork gadget contains 21 tetrahedra, a theoretical analysis of the
possible taut angle structures would be onerous;
therefore we use the software package \regina\ to assist with our analysis.

\begin{proof}
    It is simple to see that the new triangulation $\tri'$ satisfies our
    conditions for a 3-manifold triangulation: the only new edge
    identification that results from the gluing the fork gadget to the
    torus $\torus$ is that the vertical edges
    $\mathit{AC}$ and $\mathit{BD}$ on the outer cylinder
    become identified together (note that the upper
    right-hand edge $\mathit{AB}$ and the lower right-hand edge
    $\mathit{CD}$ are already identified, and the gluing to $\torus$
    is consistent with this).  In particular, no edge becomes glued to
    itself in reverse.

    The inner cylinder (with vertices $E=F$) remains unchanged, and
    becomes the first new torus boundary component $\torus'$ (recall
    again that the upper edge of the inner cylinder is already glued to
    the lower edge within the fork gadget).
    As the vertical edges $\mathit{AC}$ and $\mathit{BD}$ become
    identified, the two remaining faces on the left-hand side of the
    outer cylinder form a second boundary torus $\torus''$, with
    one horizontal edge $a \to \mathit{AB} = \mathit{CD}$,
    one diagonal edge $b \to \mathit{AD}$, and one vertical edge
    $c \to \mathit{AC} = \mathit{BD}$.

    We now examine how the taut structure $\tau$ can be extended through
    the fork gadget.  By entering Table~\ref{tab:forkfull} as a
    triangulation into the software package
    \regina\ \cite{burton04-regina,regina}\footnote{%
        Readers are welcome to download {\regina} and try this
        for themselves.  However, a word of caution: {\regina} numbers its
        tetrahedra and vertices starting from $0$, not $1$.  Therefore
        all tetrahedron labels and all vertex numbers
        in Table~\ref{tab:forkfull} must be reduced by $1$.}
    and enumerating all
    taut angle structures, we find that the fork gadget has precisely
    four taut angle structures $\tau_1,\tau_2,\tau_3,\tau_4$.
    Table~\ref{tab:taut} lists the number of times that each $\tau_i$
    marks each edge on the inner and outer cylinder of the fork gadget;
    the edges are identified by their labels as shown in
    Figures~\ref{fig:forkglue} and \ref{fig:forkattach}.

    \begin{table}[tb]
        \centering
        \begin{tabular}{l|c|c|c|c}
            Edge & $\tau_1$ & $\tau_2$ & $\tau_3$ & $\tau_4$ \\
            \hline
            \multicolumn{5}{l}{\emph{Inner cylinder}} \\
            \hline
            Vertical edge ($a$)   &0&0&2&2\\
            Diagonal edge ($b$)   &2&2&0&0\\
            Horizontal edge ($c$) &0&0&0&0\\
            \hline
            \multicolumn{5}{l}{\emph{Outer cylinder}} \\
            \hline
            Horizontal left edge ($a$, on $\torus''$)   &1&0&2&1\\
            Horizontal right edge ($a$, meets $\torus$) &1&2&0&1\\
            Diagonal left edge ($b$, on $\torus''$)     &1&2&0&1\\
            Diagonal right edge ($b$, meets $\torus$)   &1&0&2&1\\
            Vertical front edge ($c \to \mathit{AC}$)   &0&0&0&0\\
            Vertical rear edge ($c \to \mathit{BD}$)    &0&0&0&0
        \end{tabular}
        \caption{Edge markings from the four taut angle structures
        on the fork gadget}
        \label{tab:taut}
    \end{table}

    From here the proof is a simple matter of chasing edge markings
    around the diagram.
    \begin{itemize}
        \item Suppose that $\tau$ meets the torus
        $\torus$ in the boundary pattern $(2,0,0)$.
        
        Consider edge $a$ on the torus $\torus$, which becomes an
        internal edge of the final triangulation $\tri'$.
        Since edge $a$ on $\torus$ already has two markings, and since
        this edge is joined to the horizontal right edge $a$ on the
        outer cylinder of the fork gadget, this latter edge must have
        \emph{zero} markings within the fork gadget.  This means that
        the only compatible taut angle structure within the fork gadget
        is $\tau_3$.

        Likewise, edge $b$ on the torus $\torus$ becomes an internal
        edge of $\tri'$.  Since edge $b$ on $\torus$ has no markings,
        it requires two markings from within the fork gadget;
        we see from Table~\ref{tab:taut} that $\tau_3$ provides this
        as required.

        The vertical edge $c$ on the torus $\torus$ becomes the boundary
        edge $c$ of the new boundary torus $\torus''$.  Since this edge
        receives no markings from either $\torus$ or $\tau_3$, it has no
        markings in the final triangulation $\tri'$.

        This shows that combining $\tau_3$ with $\tau$ gives us a taut
        angle structure on $\tri'$; that is, $\tau$ can indeed be
        extended through the fork gadget (and this is the only one way of
        doing so).  We now examine the boundary patterns that arise on
        the new boundary tori $\torus'$ and $\torus''$.

        We have already seen above that edge $c$ on the torus $\torus''$
        receives no markings at all.
        The remaining edges of $\torus'$ and $\torus''$ are all new to the
        fork gadget, and so any markings on them must come from $\tau_3$.
        Reading these figures from
        Table~\ref{tab:taut}, we see that the inner torus $\torus'$
        receives a final boundary pattern of $(2,0,0)$, and the outer
        torus $\torus''$ likewise receives a final boundary pattern
        of $(2,0,0)$.

        \item Suppose instead that $\tau$ meets the torus $\torus$
        in the boundary pattern $(0,2,0)$.

        We can follow a similar argument as before.
        This time edge $b$ on $\torus$ already has two markings, and so
        the diagonal right edge $b$ on the outer cylinder of the fork
        gadget must have no markings within the fork gadget, forcing us
        to choose $\tau_2$.  As before we see that the markings from
        $\tau_2$ are consistent on the gluing torus $\torus$, and leave
        the new boundary tori $\torus'$ and $\torus''$ with boundary
        patterns $(0,2,0)$ and $(0,2,0)$ respectively.
    \end{itemize}

    Therefore any such taut angle structure $\tau$ on $\tri$ can be
    extended through the fork gadget, and the resulting boundary patterns on
    both new tori $\torus'$ and $\torus''$ will be identical to the original
    boundary pattern on $\torus$.
\end{proof}

\begin{remark}
    Constructing the fork gadget,
    and in particular finding the ``right'' triangulation of the annular
    prism,
    was the most difficult aspect of
    this paper.  It involved an interplay
    between theory and computation, and
    the triangulation includes substructures explicitly
    tailored to eliminate unwanted extensions of $\tau$.
    See the full version of this paper for details.
\end{remark}

\subsection{The clause gadget}
\label{sec:clause}

A clause gadget represents a clause $x_i \vee x_j \vee x_k$ from our
\problemname{monotone 1-in-3 sat} instance $\ms$.
We can attach it to three boundary tori of some
triangulation $\tri$ with a taut angle structure $\tau$, and $\tau$
will only extend through the clause gadget if exactly one of its
boundary patterns on the tori is $(2,0,0)$ (\emph{true}),
and the other two are $(0,2,0)$ (\emph{false}).
The details are as follows.

\begin{construction}[Clause gadget]
    To build a clause gadget, we begin with a two-triangle torus
    $\mathit{ABCD}$ and cone it to a point using two tetrahedra,
    as illustrated in Figure~\ref{fig:clausecone}
    (so the upper and lower faces $\emph{ABX}$ and $\emph{DCX}$ are
    joined, as are the left and right faces $\emph{ADX}$ and $\emph{BCX}$).
    This makes the cone point $X$ an ideal vertex (its link is a torus).
    There are two boundary faces remaining (at the front of the diagram);
    to each we
    attach a new tetrahedron, as shown in Figure~\ref{fig:clausetets}.
    We now have six boundary faces that together form a torus,
    which concludes our construction.

    \begin{figure}[htb]
        \centering
        \subfigure[]{\label{fig:clausecone}%
            \includegraphics[width=.25\textwidth]{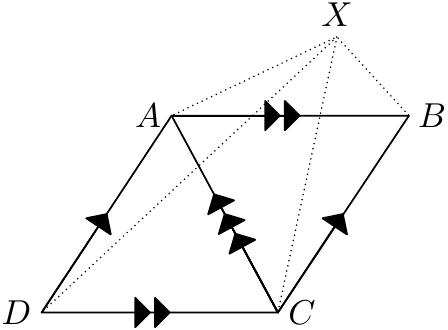}}
        \hspace{.7cm}
        \subfigure[]{\label{fig:clausetets}%
            \includegraphics[width=.21\textwidth]{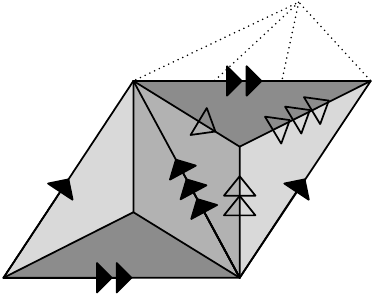}}
        \hspace{1.3cm}
        \subfigure[]{\label{fig:clausehex}%
            \includegraphics[width=.21\textwidth]{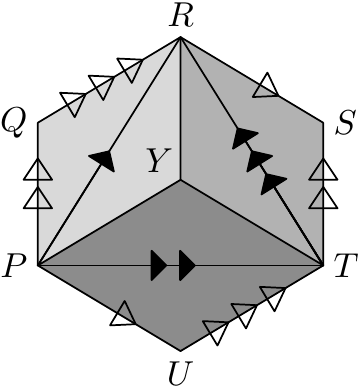}}
        \caption{Building a clause gadget}
    \end{figure}

    For convenience, Figure~\ref{fig:clausehex} presents this
    same boundary torus as a hexagon $\mathit{PQRSTU}$;
    this will make the attaching process easier to describe
    in Lemma~\ref{l:clause} below.
\end{construction}

\begin{lemma} \label{l:clause}
    Let $\tri$ be a 3-manifold triangulation (possibly disconnected),
    six of whose boundary faces form three disjoint two-triangle tori
    $\torus_1$, $\torus_2$, $\torus_3$ each with the usual $a,b,c$ edge types.
    Let $\tri'$ be the new triangulation obtained by attaching these
    three tori to a clause gadget along the rectangles
    $\mathit{PQRY}$, $\mathit{RSTY}$ and $\mathit{TUPY}$ respectively,
    as illustrated in Figure~\ref{fig:clauseattach}; in particular,
    so the type~$a$ edges join to
    $\mathit{PR}$, $\mathit{RT}$ and $\mathit{TP}$,
    and the type~$b$ edges join to
    $\mathit{QR}$ and $\mathit{PY}$ for $\torus_1$,
    $\mathit{ST}$ and $\mathit{RY}$ for $\torus_2$, and
    $\mathit{UP}$ and $\mathit{TY}$ for $\torus_3$.
    Then $\tri'$ is a 3-manifold triangulation.

    \begin{figure}[htb]
        \centering
        \includegraphics[width=.5\textwidth]{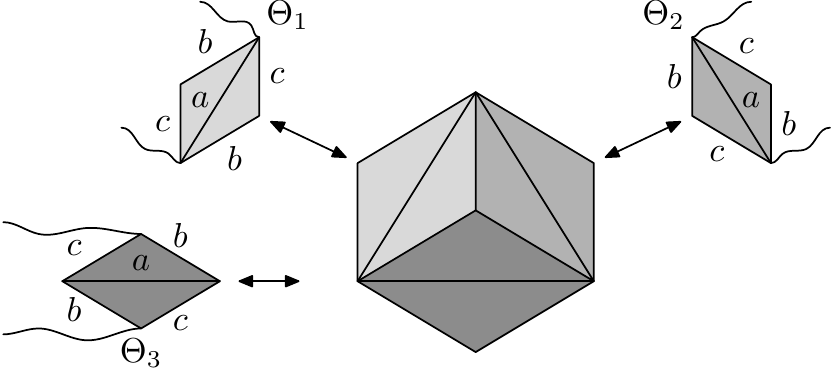}
        \caption{Attaching a clause gadget to three boundary tori of $\tri$}
        \label{fig:clauseattach}
    \end{figure}

    Let $\tau$ be a taut angle structure on $\tri$ that meets each torus
    $\torus_1,\torus_2,\torus_3$ in one of the patterns $(2,0,0)$ or $(0,2,0)$.
    Then we can extend $\tau$ through the clause gadget to obtain a taut
    angle structure $\tau'$ on $\tri'$ if and only if exactly one
    of these three boundary patterns is $(2,0,0)$.
\end{lemma}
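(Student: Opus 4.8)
The plan is to mirror the proof of Lemma~\ref{l:fork}: reduce the extension question to a finite matching problem on the boundary, resolve it by enumerating the taut angle structures carried by the clause gadget in isolation, and then establish the iff by a short case analysis. First I would dispose of the topological claim that $\tri'$ is a genuine 3-manifold triangulation, by chasing the edge identifications that the three gluings induce among the hexagon edges in Figure~\ref{fig:clauseattach} and confirming that no edge becomes identified with itself in reverse---exactly the kind of check performed for the fork gadget. Since the clause gadget has only four tetrahedra, I would then enumerate all of its taut angle structures, either by hand or (as in Lemma~\ref{l:fork}) with the aid of \regina, and tabulate for each one the number of markings it places on every boundary edge of the hexagon $\mathit{PQRSTU}$.

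The key reduction is that every boundary edge of each $\torus_i$ becomes an \emph{internal} edge of $\tri'$ once the gluing is performed, and hence must receive exactly two markings in total. A boundary pattern $(2,0,0)$ on $\torus_i$ (type-$a$ edge already doubly marked by $\tau$, type-$b$ edge unmarked) therefore forces the matching clause-gadget edges to carry $0$ markings on the type-$a$ side and $2$ on the type-$b$ side; symmetrically, $(0,2,0)$ forces $2$ on the type-$a$ side and $0$ on the type-$b$ side. The type-$c$ edges receive no markings from $\tau$ in either case and must be accounted for in the same way. Thus each choice of the three input patterns prescribes a required marking-vector on the boundary of the clause gadget, and $\tau$ extends through the gadget precisely when some enumerated clause-gadget taut angle structure realises that vector.

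It then remains to run over the $2^3 = 8$ input combinations---using any cyclic symmetry of the attaching among $\torus_1,\torus_2,\torus_3$ to reduce the bookkeeping to the essentially distinct cases of zero, one, two, or three tori in the $(2,0,0)$ state---and to read off from the table whether a compatible gadget structure exists. I expect to find that a matching structure is available in exactly the three \emph{one-in-three} cases and in none of the others, which yields both directions of the iff at once.

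The main obstacle is this final verification. One must be certain both that the enumeration of clause-gadget taut angle structures is complete and that no spurious structure accidentally satisfies a forbidden combination---in particular the all-false, two-true, or all-true cases---since it is precisely the exclusion of these that encodes the \problemname{monotone 1-in-3 sat} constraint. This is where the specific triangulation of the gadget (the coned two-triangle torus capped by two further tetrahedra) has to be exactly right, and where I would rely on \regina\ to guarantee that the list of taut angle structures on this four-tetrahedron gadget is exhaustive.
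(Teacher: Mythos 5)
Your proposal is correct in substance, but it takes a genuinely different route from the paper. You prove the clause lemma the way the paper proves the \emph{fork} lemma (Lemma~\ref{l:fork}): reduce to a finite matching problem, enumerate all taut angle structures on the standalone gadget with \regina, tabulate their boundary markings, and check the $2^3$ input combinations (up to cyclic symmetry) against the table. The paper instead gives a purely theoretical proof for this gadget. For the ``only if'' direction it uses two counting arguments: the edges $\mathit{AB},\mathit{BC},\mathit{CA}$ (the images of the three type-$a$ edges) become three distinct internal edges needing six markings in total, yet every opposite-edge pair in every gadget tetrahedron contains at most one of them, so the gadget can supply at most four---forcing at least one torus to be $(2,0,0)$; dually, the three identified classes $\mathit{QP}=\mathit{RY}=\mathit{ST}$, $\mathit{QR}=\mathit{PY}=\mathit{UT}$, $\mathit{RS}=\mathit{YT}=\mathit{PU}$ need six markings but can receive at most two from within the gadget---forcing at least two tori to be $(0,2,0)$. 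For the ``if'' direction it exhibits an explicit marking (pairs $\mathit{AB}/\mathit{CX}$ and $\mathit{DC}/\mathit{AX}$ on the cone tetrahedra, $\mathit{AC}$ plus its opposite on each attached tetrahedron) and chases it. The trade-off: the paper's argument is hand-verifiable and explains structurally \emph{why} the gadget encodes one-in-three (asymmetric capacity bounds on the two families of edges), whereas your enumeration is mechanical, uniform with the fork-gadget analysis, and delegates exhaustiveness to software---a route the authors explicitly acknowledge as viable (``this can also be verified computationally'').

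One point of care in your bookkeeping: the compatibility check must be performed on \emph{identified edge classes}, not per torus. After gluing, the type-$b$ edge of one torus, the type-$c$ edge of the adjacent torus, and three hexagon edges (e.g.\ $\mathit{QR}$, $\mathit{PY}$, $\mathit{UT}$) all become a single internal edge, so the condition is that the \emph{sum} of the gadget's markings over those hexagon edges plus the external contributions from both tori equals exactly two. Your per-torus phrasing (``$0$ on the type-$a$ side and $2$ on the type-$b$ side'') gives the right numbers only because type-$c$ edges always carry zero external markings; when you build the table from \regina's output you should aggregate gadget markings by class, since a standalone gadget structure may distribute markings unevenly across hexagon edges that later become identified.
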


We give a theoretical proof here, but again
this can also be verified computationally.

\begin{proof}
    Once again it is simple to see that $\tri'$ satisfies our
    conditions for a 3-manifold triangulation:
    as a result of the torus gluings, we obtain new edge
    identifications on the clause gadget as follows:
    $\mathit{QP} = \mathit{RY} = \mathit{ST}$;
    $\mathit{QR} = \mathit{PY} = \mathit{UT}$; and
    $\mathit{RS} = \mathit{YT} = \mathit{PU}$.  None of these
    identifications cause an edge to be glued to itself in reverse.

    Now let $\tau$ be a taut angle structure on $\tri$ as described in
    the statement of the lemma.  We examine how we might extend $\tau$
    through the clause gadget.

    \begin{itemize}
        \item
        Consider the three edges $\mathit{AB}$, $\mathit{BC}$ and
        $\mathit{CA}$.  These become three distinct internal edges after the
        gluings, and so they require six markings between them.
        However, every pair of opposite edges in every tetrahedron from the
        clause gadget only includes one of these three
        edges, and so between them
        these three edges can only receive at most four markings from within the
        clause gadget itself.

        Therefore edges $\mathit{AB}$, $\mathit{BC}$ and $\mathit{CA}$ must
        receive at least two markings from the external structure $\tau$.
        Since $\mathit{AB}$, $\mathit{BC}$ and $\mathit{CA}$ are all joined
        to edges of type $a$ on the tori $\torus_1,\torus_2,\torus_3$, it
        follows that at least one of the boundary patterns on these
        three tori must be $(2,0,0)$.

        \item
        Now consider the three edges
        $\mathit{QP} = \mathit{RY} = \mathit{ST}$,
        $\mathit{QR} = \mathit{PY} = \mathit{UT}$ and
        $\mathit{RS} = \mathit{YT} = \mathit{PU}$.
        Again these become three distinct internal edges after the
        gluings, and so again they require six markings between them.
        However, this time they can only receive \emph{two} markings
        from within the clause gadget: these edges do not meet the
        ``cone tetrahedra'' from Figure~\ref{fig:clausecone} at all,
        and the two tetrahedra that we attach in
        Figure~\ref{fig:clausetets} feature these edges only once
        for any pair of opposite edges.

        Therefore edges
        $\mathit{QP} = \mathit{RY} = \mathit{ST}$,
        $\mathit{QR} = \mathit{PY} = \mathit{UT}$ and
        $\mathit{RS} = \mathit{YT} = \mathit{PU}$ need at least four
        markings between them from the external structure $\tau$.
        Since these edges are all joined to edges of types $b$ and $c$
        on the tori $\torus_1,\torus_2,\torus_3$, it follows that at least
        two of the boundary patterns on these tori must be $(0,2,0)$.
    \end{itemize}

    We have now established that, if there is any hope to extend $\tau$
    through the clause gadget, exactly one of the three boundary
    patterns on the three tori $\torus_1$, $\torus_2$ and $\torus_3$ must
    be $(2,0,0)$.  We must still show that such an extension is possible.

    Our clause gadget is symmetric, and so without loss of generality
    we can suppose that $\tau$ meets $\torus_1$ in
    the boundary pattern $(2,0,0)$, and meets $\torus_2$ and $\torus_3$
    in the boundary pattern $(0,2,0)$.  We mark edges within the clause
    gadget as follows:
    for the ``cone tetrahedra'' in Figure~\ref{fig:clausecone}, we mark
    edge pairs $\mathit{AB}/\mathit{CX}$ and
    $\mathit{DC}/\mathit{AX}$.
    For each of the two tetrahedra that we attach in
    Figure~\ref{fig:clausetets},
    we mark $\mathit{AC}$ and the corresponding opposite edge.

    From here it is a simple matter to follow the edge
    markings through the clause gadget and verify that every edge of the
    clause gadget receives exactly two markings in total;
    that is, $\tau$ can indeed be extended through the clause gadget as
    required.
\end{proof}

\subsection{Proving Theorem~\ref{thm:np}}
\label{sec:npproof}

Now that we are equipped with our various gadgets,
we can prove Theorem~\ref{thm:np}, i.e.,
that \problemname{taut angle structure} is \NP-complete.

In summary: we build a variable gadget for each variable $x_i$ of $\ms$,
duplicate its boundary torus using fork gadgets until we have one ``copy''
for each time $x_i$ occurs in a clause of $\ms$, and then hook these
boundary tori together using clause gadgets.
Lemmas~\ref{l:variable}, \ref{l:fork} and \ref{l:clause} together
ensure that the resulting triangulation (which is indeed orientable
with no boundary faces) has a taut angle structure if and only if
$\ms$ is solvable.
The full details are as follows.

\begin{proof}
    First we note that \problemname{taut angle structure}
    is clearly in \NP: if a taut angle structure exists, then the
    corresponding edge markings form a linear-sized certificate that
    is simple to verify in small polynomial time.

    To show that \problemname{taut angle structure} is
    \NP-complete, we give a polynomial reduction from
    \problemname{monotone 1-in-3 sat}.
    Let $\ms$ be an instance of \problemname{monotone 1-in-3 sat},
    as described at the beginning of Section~\ref{sec:np},
    with $t$ variables $x_1,\ldots,x_t$, and with $c$ clauses
    each of the form $x_i \vee x_j \vee x_k$.  For simplicity we assume
    that each variable appears in at least one clause (otherwise it can
    be harmlessly removed).
    We build a corresponding triangulation $\tri_\ms$ as follows:
    \begin{enumerate}[(i)]
        \item For each variable $x_i$, we construct a variable gadget $V_i$.

        \item For each variable $x_i$, suppose that $x_i$ appears
        $n_i$ times in total amongst the clauses of $\ms$ (so $\sum n_i = 3c$).
        Beginning with $V_i$, we attach a fork gadget to the boundary
        torus of $V_i$, then attach another fork gadget to one of the
        new boundary tori and so on, until we have attached $n_i-1$ fork
        gadgets in total.  Each time we attach a fork gadget we ensure
        that the boundary edge labels $a,b,c$ match, as described in
        Lemma~\ref{l:fork}.  The result is a connected triangulation
        with $n_i$ distinct two-triangle boundary tori; we denote this
        triangulation by $W_i$.

        \item For each clause $x_i \vee x_j \vee x_k$, we construct a
        clause gadget and attach one of the boundary tori from $W_i$,
        one of the boundary tori from $W_j$, and one of the boundary
        tori from $W_k$.  Again we ensure that the boundary edge labels
        $a,b,c$ match, as described in Lemma~\ref{l:clause}.
    \end{enumerate}

    By Observation~\ref{obs:lst} and Lemmas~\ref{l:fork} and
    \ref{l:clause}, the resulting object $\tri_\ms$ is a 3-manifold
    triangulation; moreover, it is simple to see from the construction
    that $\tri_\ms$ is orientable and has no remaining boundary faces.
    The total number of tetrahedra in $\tri_\ms$ is
    $2t + 21 \sum_{i=1}^t (n_i - 1) + 4c = 67c - 19t$, and the
    construction is easy to perform in small polynomial time in $t$ and $c$.

    All that remains is to show that $\tri_\ms$ has a taut angle
    structure if and only if $\ms$ is solvable:
    \begin{itemize}
        \item Suppose that $\ms$ is solvable.
        By Lemma~\ref{l:variable},
        we can assign a taut angle structure to each $V_i$ with
        boundary pattern $(2,0,0)$ or $(0,2,0)$ according to whether
        $x_i$ is true or false respectively.  By Lemma~\ref{l:fork},
        this extends to a taut angle structure on each $W_i$ where
        \emph{every} boundary pattern on $W_i$ is $(2,0,0)$ or
        $(0,2,0)$ according to whether $x_i$ is true or false respectively.
        Finally, because each clause contains exactly one true variable,
        Lemma~\ref{l:clause} shows that these taut angle structures
        extend through the clause gadgets, giving a taut angle
        structure on the full triangulation $\tri_\ms$.

        \item Suppose that $\tri_\ms$ has a taut angle structure $\tau$.
        By Lemma~\ref{l:variable}, restricting $\tau$ to each variable
        gadget $V_i$ must give one of the boundary patterns $(2,0,0)$ or
        $(0,2,0)$; we set the corresponding variable $x_i$ to true or false
        accordingly.  For each $i$, Lemma~\ref{l:fork} shows that
        $\tau$ must meet every boundary torus of $W_i$ in the
        same pattern as for $V_i$; that is, $(2,0,0)$ if we set $x_i$ to
        true, or $(0,2,0)$ if we set $x_i$ to false.
        Finally, because we know that $\tau$ extends through the clause
        gadgets, Lemma~\ref{l:clause} shows that each clause must have
        exactly one variable $x_i$ set to true; that is, $\ms$ is solvable.
    \end{itemize}

    Therefore our construction is indeed a polynomial reduction from
    \problemname{monotone 1-in-3 sat} to \problemname{taut angle structure},
    and so \problemname{taut angle structure} is \NP-complete.
\end{proof}

\section{Fixed-parameter tractability}
\label{sec:fpt}

So far we have shown that detecting taut angle structures is hard in general. 
However, in practice running times are often surprisingly fast.
This leads us to the natural question of
whether the running time
can be improved if we restrict ourselves to more 
specific classes of triangulations.

The way we approach this
question here is to prove that \problemname{taut angle structure}
is fixed-parameter tractable in both the cutwidth and the treewidth of
the face pairing graph of the triangulation.  The precise results are
given by Theorem~\ref{thm:cutwidth} (for cutwidth) and
Theorem~\ref{thm:treewidth} (for treewidth), both of which we restate below.
In this section we give full proofs for both of these theorems.

Note that the precise running times given here
(parameterised by both the number of tetrahedra $n$ and
the cutwidth/treewidth $k$) assume that we are given extra information
alongside our triangulation: for Theorem~\ref{thm:cutwidth} we assume
a left-to-right ordering (or layout) of nodes that corresponds to a
cutwidth of $\leq k$, and for Theorem~\ref{thm:treewidth} we assume a
tree decomposition of width $\leq k$ with $O(n)$ tree nodes.
In contrast, Corollary~\ref{cor:fpt} (that
\problemname{taut angle structure} is linear-time fixed-parameter
tractable in the treewidth) does not require any such information,
since in the setting of bounded treewidth
we can use the algorithm of Bodlaender \cite{bodlaender96-linear}
to compute such a tree decomposition in time linear in $n$.

In both proofs we assume that we have access to the full skeleton
of the triangulation $\tri$ (i.e., we know which tetrahedron
edges are identified and which tetrahedron vertices are identified);
such information is easily computed using linear time
depth-first search techniques.

\subsection{Bounded cutwidth: Proving Theorem \ref{thm:cutwidth}}

\setcounter{theorem}{1} 
\begin{theorem}
    Let $\tri$ be a 3-manifold triangulation with $n$ tetrahedra,
    where the graph $\fpg{\tri}$ has cutwidth $\leq k$, and for which a
    corresponding layout of nodes is known.
    Then \problemname{taut angle structure} can be solved for $\tri$ in
	$O(nk \cdot 3^{3k/2})$ time.
\end{theorem}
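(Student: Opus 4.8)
The plan is to run a left-to-right dynamic program over the given layout $\nu_1,\dots,\nu_n$ of the tetrahedra. Since a taut angle structure amounts to choosing, for each tetrahedron, one of its three pairs of opposite edges to mark, I would process the tetrahedra in the order $\nu_1,\dots,\nu_n$ and maintain a table of partial ``states.'' After $\nu_1,\dots,\nu_i$ have been processed, the only information that the remaining tetrahedra can still interact with is carried on the edges of the triangulation that lie on the faces crossing the $i$-th cut; call these the \emph{active} edges. For each active edge I would record the number of markings it has accumulated so far from the processed tetrahedra, a value in $\{0,1,2\}$ (any larger value is immediately infeasible and can be pruned). The DP state at cut $i$ is thus a vector of counts indexed by the active edges, and the cost of the algorithm hinges on bounding how many active edges there can be.

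The key counting step is to show there are at most $3k/2$ active edges at any cut, which is what yields the factor $3^{3k/2}$ rather than $3^{3k}$. Since the input has no boundary faces, every edge $e$ of the triangulation is internal and its link is a circle; equivalently, the tetrahedra meeting $e$ are arranged cyclically, consecutive ones being glued along a face containing $e$. A given cut partitions this cyclic sequence into processed and unprocessed arcs, and the faces containing $e$ that cross the cut are exactly the boundaries between these arcs. On a circle the number of such boundaries is even, and if $e$ is active (it has at least one processed and at least one unprocessed incident tetrahedron) this number is at least $2$. Hence every active edge occupies at least two of the $3 \times (\text{number of crossing faces}) \le 3k$ available edge-slots on the crossing faces, so the number of active edges is at most $3k/2$ and the number of DP states at each cut is at most $3^{3k/2}$.

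With this bound in hand the transitions are routine bookkeeping. To advance from cut $i$ to cut $i+1$ I would move $\nu_{i+1}$ into the processed region: for each of the $\le 3^{3k/2}$ incoming states and each of the three opposite-edge choices for $\nu_{i+1}$, I update the counts on the edges of $\nu_{i+1}$, initialising counts for edges that newly become active and incrementing those already active. Whenever an edge becomes \emph{complete} (all of its incident tetrahedra are now processed) I verify that its count is exactly $2$ and otherwise discard the branch; similarly I discard any branch whose count exceeds $2$. Each such update touches $O(k)$ edges, so processing one tetrahedron costs $O(k \cdot 3^{3k/2})$ time, and over all $n$ tetrahedra the total is $O(nk \cdot 3^{3k/2})$. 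Since the final cut (after all tetrahedra are processed) has no active edges, a taut angle structure exists precisely when at least one branch survives to the end; the skeleton data needed to identify edges, crossing faces, and completion times is precomputed once in linear time.

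The step I expect to be the main obstacle is the factor-of-two improvement in the active-edge bound: the naive estimate of $3k$ active edges (three edges per crossing face) only yields $3^{3k}$, and shaving this to $3^{3k/2}$ requires the cyclic-link argument above together with a careful accounting of edge-slots in the possible presence of self-identifications, where a single edge of the triangulation may appear more than once on the frontier or within one face. Ensuring that the ``even, at least two'' count is genuinely a statement about edge-slots rather than merely about distinct faces is the delicate part; the remaining transition and correctness arguments are mechanical.
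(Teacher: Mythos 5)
Your proposal is correct and follows essentially the same route as the paper: a left-to-right dynamic program over the given layout whose states are the marking counts $\{0,1,2\}$ on the frontier edges, with $\leq 3k/2$ such edges giving $\leq 3^{3k/2}$ states and $O(k)$ work per transition. The only cosmetic difference is how the $3k/2$ bound is justified---you argue via edge links (each active edge meets the crossing faces in an even number $\geq 2$ of edge-slots), while the paper observes that the $\leq k$ boundary faces of the processed sub-triangulation form a closed, possibly pinched surface and so carry at most $\tfrac{3}{2}$ as many edges as faces; these are the same count, and your version correctly handles the self-identification subtlety the paper absorbs into its notion of pinched edges.
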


\begin{proof}
    Let the given layout of nodes of the face pairing graph
    $\fpg{\tri}$ be $v_1 , \ldots , v_n$,
    so that no cut $C_i = (\{v_1 , \ldots , v_i\},
    \ \{v_{i+1} , \ldots , v_{n}\})$ has width more than $k$.

    Recall that every node $v_i$ of $\Gamma (\tri)$ corresponds to a
    tetrahedron $\Delta_{v_i}$ of the triangulation $\tri$, and that every
    arc in the cutset for $C_i$ is an arc of $\fpg{\tri}$,
    and represents a triangle of $\tri$.
    
    Following a dynamic programming approach,
    we define sub-triangulations $\tri_1,\ldots,\tri_n$,
    where the sub-triangulation $\tri_i$ contains only the tetrahedra
    $\Delta_{v_1} , \ldots , \Delta_{v_i}$.
    We maintain all face gluings between these tetrahedra,
    but if a tetrahedron $\Delta_{v_x}$ is glued to a tetrahedron
    $\Delta_{v_y}$ in the full triangulation $\tri$ with $x \leq i < y$,
    then this will simply appear as a boundary face of
    $\Delta_{v_x}$ in $\tri_i$.

    A triangulation $\tri_i$ might contain ``pinched edges'',
    which occur when multiple edges on the boundary of the
    sub-triangulation $\tri_i$
    correspond to the same edge of $\tri$ (see edge $e$ in
    Figure~\ref{fig:pinch}).
    We happily accept such anomalies and consider these edges identical
    in $\tri_i$; as a result the number of edges on the boundary of
    $\tri_i$ is \emph{at most}
    (but might not be equal to) $3/2$ the number of faces.

    \begin{figure}[htb]
	\begin{center}
		\includegraphics[width=0.45\textwidth]{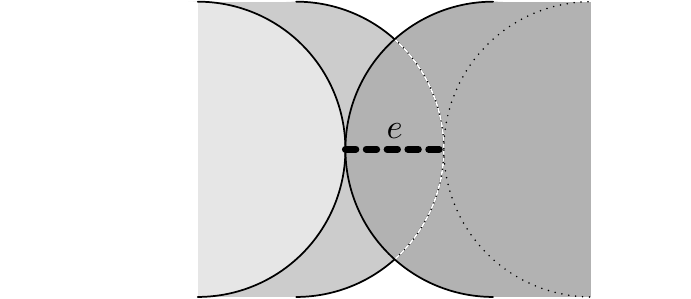}
	\end{center}
	\caption{A pinched edge in the boundary of $\tri_i$}
	\label{fig:pinch}
    \end{figure}

    By construction, every boundary face of $\tri_i$ corresponds to an
    arc in the cutset $C_i$, and so it follows that $\tri_i$ has
    $\leq k$ boundary faces and thus $\leq 3k/2$ boundary edges.
    Since any taut angle structure on $\tri_i$ must mark each boundary edge
    0, 1 or 2 times,
    there can be at most $3^{3k/2}$ different patterns of markings
    on the boundary of $\tri_i$ that correspond to taut angle structures
    on $\tri_i$.

    Following our dynamic programming strategy,
    we work through the triangulations in the order
    $\tri_1,\ldots,\tri_n$, and for each $i$ we compute precisely
    which boundary marking patterns on $\tri_i$ correspond to
    taut angle structures on $\tri_i$:
    \begin{itemize}
        \item For $\tri_1$ we simply try all three choices of markings
        on the tetrahedron $\Delta_{v_1}$, identify which of these
        form a taut angle structure on $\tri_1$, and store the resulting
        marking patterns on the boundary.

        \item For $\tri_i$, we consider each of the $\leq 3^{3k/2}$
        boundary marking patterns on $\tri_{i-1}$ that yields a
        taut angle structure on $\tri_{i-1}$, and attempt to combine
        these with each of the three choices of markings on the
        new tetrahedron $\Delta_{v_i}$.
        We discard any combination that marks an edge more than twice,
        or that marks an internal edge less than twice; the remaining
        combinations yield taut angle structures on $\tri_i$, and we
        store them in our solution set for $\tri_i$.
        See Figure~\ref{fig:cutWidth} for an illustration of this procedure.
    \end{itemize}

    \begin{figure}[htb]
        \begin{center}
            \includegraphics[width=.66\textwidth]{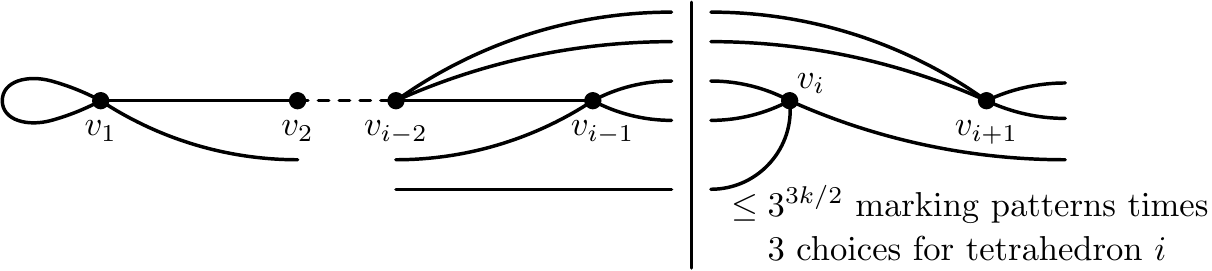}
        \end{center}
        \caption{Finding taut angle structures under bounded cutwidth}
        \label{fig:cutWidth}
    \end{figure}

    Since the final triangulation $\tri_n = \tri$ has no boundary
    faces (and hence no boundary edges), the full triangulation $\tri$
    has a taut angle structure if and only if the solution set for
    $\tri_n$ contains the empty marking pattern (as opposed to no
    marking patterns at all).

    Taking into account that in each step the number of boundary
    patterns to consider is at most $3^{3k/2}$, that there are only three
    choices of markings for the tetrahedron $\Delta_{v_i}$, and that we
    can update and test each boundary marking pattern in $O(k)$ time,
    it follows that each step can be performed in $O(k \cdot 3^{3k/2})$ time
    overall.  The total running time for all $n$ steps of the
    algorithm is therefore
    $O(nk \cdot 3^{3k/2})$.
\end{proof}

\subsection{Bounded treewidth: Proving Theorem \ref{thm:treewidth}}

\begin{theorem}
    Let $\tri$ be a 3-manifold triangulation with $n$ tetrahedra,
    where the graph $\fpg{\tri}$ has treewidth $\leq k$, and for which a
    corresponding tree decomposition with $O(n)$ tree nodes is known.
    Then \problemname{taut angle structure} can be solved for $\tri$ in
    $O(nk \cdot 3^{7k})$ time.
\end{theorem}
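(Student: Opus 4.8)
The plan is to mirror the dynamic programming strategy used in the proof of Theorem~\ref{thm:cutwidth}, but to process the triangulation along a tree decomposition rather than a linear layout, so that we can accommodate branching. First I would convert the given width-$\le k$ tree decomposition (with $O(n)$ bags) into a \emph{nice} tree decomposition in linear time by the standard transformation, so that every tree node is one of four types---a leaf with empty bag, an \emph{introduce} node $X_j = X_i \cup \{v\}$, a \emph{forget} node $X_j = X_i \setminus \{v\}$, or a \emph{join} node with two children sharing the same bag---while keeping $O(n)$ nodes in total and each bag of size $\le k+1$. Recall that each node $v$ of $\fpg{\tri}$ is a tetrahedron $\Delta_v$, so a bag is a set of $\le k+1$ tetrahedra.

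The key structural fact is the \emph{separator property} of tree decompositions: writing $V_i$ for the set of tetrahedra appearing in the subtree rooted at node $i$, the bag $X_i$ separates $V_i \setminus X_i$ from the remaining tetrahedra in $\fpg{\tri}$. Chasing the cyclic sequence of tetrahedra around any edge $e$ of $\tri$, this means that the only edges which can still receive markings from tetrahedra outside $V_i$ are those incident to a tetrahedron of $X_i$; any edge lying wholly inside $V_i$ but incident to no bag tetrahedron has already received all of its markings and must be marked exactly twice. I would therefore define the DP state at node $i$ to record (a) a choice of marked opposite-edge pair for each of the $\le k+1$ tetrahedra in $X_i$, and (b) the number of markings ($0$, $1$ or $2$) accumulated so far, from the tetrahedra in $V_i$, on each edge of $\tri$ incident to a bag tetrahedron. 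Since each tetrahedron has $6$ edges there are at most $6(k+1)$ such edges, so the number of states per node is at most $3^{\,k+1}\cdot 3^{\,6(k+1)} = 3^{\,7(k+1)} = O(3^{7k})$, and a state realised by a partial taut angle structure on $V_i$ is called \emph{feasible}.

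The transitions are then routine. At a leaf the only state is the empty one. At an introduce node I would try each of the three markings for the new tetrahedron $\Delta_v$, add its contribution to the incident edge counts, and discard any state in which an edge exceeds two markings. At a forget node I would, for each edge incident to $\Delta_v$ but to no other tetrahedron of the new bag, require (justified by the separator property) that its count equals exactly $2$, discarding the state otherwise, and then drop that edge from the record. At a join node I would combine two child states that agree on the bag markings, summing their incident-edge counts while subtracting the contribution of the shared bag tetrahedra so that it is not double-counted, and again discarding any over-marked edge. Because the root bag is empty, $\tri$ admits a taut angle structure if and only if the (unique) empty state is feasible at the root; since $\fpg{\tri}$ has no boundary, this says precisely that every edge was closed off at a count of $2$.

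The main obstacle is the join node, both for correctness and for complexity. For correctness I must carefully separate the markings coming from the bag tetrahedra (common to both children) from those coming from the rest of each subtree, so that combining the two partial structures neither double-counts nor drops a marking; I expect that tracking the marking choice of every bag tetrahedron explicitly---rather than only the aggregate edge counts---is what makes this bookkeeping clean, and it is this extra factor $3^{\,k+1}$ that accounts for the exponent $7k$ rather than $6k$. For complexity, a naive pairing of the $O(3^{7k})$ states of the two children costs $O(3^{14k})$, whereas the target $O(nk\cdot 3^{7k})$ budgets only $O(k\cdot 3^{7k})$ per node; the delicate point is to perform the join by grouping states according to their shared bag markings and combining the additive edge counts without the quadratic blow-up. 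The remaining ingredients---bounding the state count, the introduce and forget transitions, and multiplying $O(k)$ work per state by $O(n)$ nodes---are then direct and yield the claimed $O(nk\cdot 3^{7k})$ running time, with the linear-time production of the tree decomposition itself supplied by \cite{bodlaender96-linear}.
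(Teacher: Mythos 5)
Your overall framework---dynamic programming over a tree decomposition, with the separator property justifying when an edge can be ``closed off'' at exactly two markings---is sound, and your leaf, introduce and forget transitions are fine. But there is a genuine gap exactly where you flag it: the join node. Grouping states by their shared bag markings only factors out the $3^{k+1}$ choices for the bag tetrahedra; within one such group, each child can still carry up to $3^{6(k+1)}$ distinct accumulated-count vectors on the \emph{same} set of $\leq 6(k+1)$ edges, and since different pairs of vectors produce different sums, you must in general examine all pairs. That is up to $3^{12(k+1)}$ pairs per bag marking, i.e.\ roughly $3^{13k}$ work at a single join, far above your per-node budget of $O(k \cdot 3^{7k})$. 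What you need there is an exact truncated convolution over $\{0,1,2\}^{6(k+1)}$, and no routine device achieves it within the budget (the ranked zeta/M\"obius transform behind fast subset convolution computes the disjoint-support product, not sums where both children contribute markings to the same edge). Since your proposal supplies no such method, the claimed running time is not established.

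The paper sidesteps this entirely by a different choice of state, and this is the idea your write-up is missing. For a tree node $\nu$, the paper's sub-triangulation $\tri_\nu$ contains only those tetrahedra appearing in \emph{no} bag outside the subtree at $\nu$ (in particular, tetrahedra shared with the parent's bag are excluded), and the state is just the marking pattern on the boundary edges of $\tri_\nu$. Two observations then do the work: distinct children have tetrahedron-disjoint sub-triangulations, and every boundary face of every child corresponds to a distinct face of some tetrahedron in the bag at $\nu$, so all children \emph{together} have at most $4(k+1)$ boundary faces and at most $6(k+1)$ boundary edges, counted with multiplicity. Hence the children's states live on disjoint edge slots, and the number of \emph{combinations} of child states is bounded by $3^{b_1} \cdots 3^{b_d} = 3^{b_1 + \cdots + b_d} \leq 3^{6(k+1)}$---linear in the total pattern space rather than quadratic---after which the $\leq k+1$ tetrahedra new to the bag at $\nu$ contribute the remaining factor $3^{k+1}$, giving $O(k \cdot 3^{7(k+1)}) = O(k \cdot 3^{7k})$ per node. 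If you wish to keep the nice-decomposition formulation, you would have to prove an analogous disjointness statement for your states; as written, the join step fails and with it the stated bound.
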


\begin{proof}
    Here we adopt a similar approach to before, but this time we do our dynamic
    programming over a tree.

    Recall that each node $\nu$ of the tree corresponds to a bag of
    nodes in $\fpg{\tri}$; that is, a bag of tetrahedra.
    We arbitrarily choose a root for the tree, so that the tree becomes a
    hierarchy of subtrees as illustrated in Figure~\ref{fig:treeWidth}.

    \begin{figure}[htb]
        \begin{center}
            \includegraphics[width=0.34\textwidth]{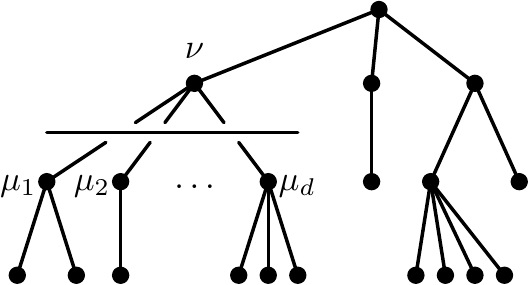}
        \end{center}
        \caption{Dynamic programming over a tree decomposition}
        \label{fig:treeWidth}
    \end{figure}

    As in the \problemname{taut angle structure} problem statement,
    we assume that $\tri$ has no boundary faces.
    For each node $\nu$ of the tree, we define the sub-triangulation
    $\tri_\nu$ by considering only those tetrahedra that appear only in
    bags within the subtree rooted at $\nu$; that is, we exclude
    any tetrahedron that appears in any bag outside this subtree.
    As in the previous proof,
    we maintain all face gluings between these tetrahedra;
    however, if a tetrahedron $\Delta \in \tri_\nu$ is glued to a tetrahedron
    $\Delta' \notin \tri_\nu$ in the full triangulation $\tri$ then this will
    simply appear as a boundary face of $\Delta$ in $\tri_\nu$.
    Once again a triangulation $\tri_\nu$ might contain ``pinched edges'';
    again we happily accept such anomalies and consider these edges identical
    in $\tri_\nu$, which means that
    the number of edges on the boundary of $\tri_\nu$ is at most
    (but not necessarily equal to) $3/2$ the number of faces.

    We now make a series of observations:
    \begin{enumerate}[(i)]
        \item \label{en:bdry}
        \emph{Each sub-triangulation $\tri_\nu$ has at most
        $4(k+1)$ boundary faces and at most $6(k+1)$ boundary edges.}

        If $\nu$ is the root node then this is trivial (since
        $\tri_\nu=\tri$ contains no boundary faces at all).
        Otherwise, let $\eta$ be the parent node of $\nu$ in the tree.
        Any boundary face of $\tri_\nu$ must correspond to some gluing between
        tetrahedra $\Delta \in \tri_\nu$
        and $\Delta' \notin \tri_\nu$, which in turn corresponds to some arc
        $\alpha$ in the face pairing graph.  Because
        $\Delta$ only appears in bags within the subtree rooted at $\nu$,
        and because one of these bags
        must contain both endpoints of the arc $\alpha$, it follows that
        $\Delta'$ appears in some bag within the subtree at $\nu$ also.

        By the definition of tree decomposition, since $\Delta'$ appears
        in some bag within the subtree at $\nu$ and also in some bag outside
        this subtree, it must also appear in the bag at the parent node $\eta$.
        There are $\leq k+1$ tetrahedra in the bag at $\eta$
        and so $\leq 4(k+1)$ possibilities for the face of $\Delta'$
        that is joined to $\Delta$.
        Therefore there are $\leq 4(k+1)$ such boundary faces of $\tri_\nu$.

        Finally, since the number of edges on the boundary surface is
        $\leq 3/2$ the number of faces,
        $\tri_\nu$ has $\leq 6(k+1)$ boundary edges.

        \item \label{en:disjoint}
        \emph{Suppose the tree node $\nu$ has child nodes
        $\mu_1,\ldots,\mu_d$, as illustrated in Figure~\ref{fig:treeWidth}.
        Then no two triangulations $\tri_{\mu_i},\tri_{\mu_j}$ have any
        tetrahedra in common.}

        This is true by definition of $\tri_{\mu_i}$, since no tetrahedron
        in $\tri_{\mu_i}$ can appear in the subtree rooted at
        $\mu_j$ (which lies outside the subtree rooted at $\mu_i$).

        \item \label{en:bdrysum}
        \emph{Suppose the tree node $\nu$ has child nodes
        $\mu_1,\ldots,\mu_d$, as illustrated in Figure~\ref{fig:treeWidth}.
        Then the triangulations $\tri_{\mu_1},\ldots,\tri_{\mu_d}$
        have $\leq 4(k+1)$ boundary faces in total.
        Moreover, they have $\leq 6(k+1)$ boundary edges in total, even
        if we count each edge repeatedly for each $\tri_{\mu_i}$
        that contains it.}

        As in the argument for (\ref{en:bdry}) above,
        each boundary face of each triangulation $\tri_{\mu_i}$
        corresponds to a face of some tetrahedron $\Delta'$ in the bag
        at the parent node $\nu$.  Moreover, from (\ref{en:disjoint})
        the triangulations $\tri_{\mu_i}$ contain distinct tetrahedra,
        and so each such boundary face can only appear in \emph{one} of the
        $\tri_{\mu_i}$.  Therefore the triangulations
        $\tri_{\mu_1},\ldots,\tri_{\mu_d}$ have
        $\leq 4(k+1)$ boundary faces between them, and these boundary
        faces are all distinct.
        
        From above, each $\tri_{\mu_j}$ has at most $3/2$ as many
        boundary edges as it has boundary faces.
        It therefore follows that $\tri_{\mu_1},\ldots,\tri_{\mu_d}$ have
        $\leq 6(k+1)$ boundary edges between them, even if we count
        repeated edges multiple times.
    \end{enumerate}

    Our algorithm for solving \problemname{taut angle structure}
    is based on dynamic programming over the tree, and operates as follows.
    For each triangulation $\tri_\nu$, we
    compute the number of marking patterns on the boundary
    of $\tri_\nu$ that correspond to taut angle structures on $\tri_\nu$.
    Since there are $\leq 6(k+1)$ boundary edges on $\tri_\nu$,
    there are $\leq 3^{6(k+1)}$ such possible marking patterns.

    We work our way from the leaves of the tree up to the root,
    computing these boundary patterns on each triangulation
    $\tri_\nu$ as we go:
    \begin{itemize}
        \item If $\nu$ is a leaf node, we simply try all
        $3^{k+1}$ possible markings on the $\leq k+1$ tetrahedra in the
        bag at $\nu$, which takes $O(3^{k+1})$ time.
        For each combination that yields a taut angle structure,
        we record the corresponding marking pattern on the boundary.

        \item If $\nu$ is not a leaf node then let $\mu_1,\ldots,\mu_d$ be
        its immediate children in the tree, as illustrated in
        Figure~\ref{fig:treeWidth}.  Let $\tri'$ be the
        (possibly disconnected) triangulation obtained by combining the
        tetrahedra from $\tri_{\mu_1},\ldots,\tri_{\mu_d}$.

        For each combination of boundary marking patterns on
        $\tri_{\mu_1},\ldots,\tri_{\mu_d}$, we combine these into a
        single boundary marking pattern on $\tri'$ (if any boundary edges
        are repeated then we sum the corresponding markings).
        By observation~(\ref{en:bdrysum})
        above, we can form each such combination in $O(k)$ time.
        If each $\tri_{\mu_i}$ has $b_i$ boundary edges,
        the total number of combinations that we form is
        $\leq 3^{b_1} \ldots 3^{b_d} = 3^{b_1+\ldots+b_d} \leq 3^{6(k+1)}$.

        We discard any combination that marks a boundary edge more than
        twice in total, or that marks an internal edge less than twice.
        Any combination $b'$ that survives must correspond to
        a taut angle structure on $\tri'$ (we simply combine the
        taut angle structures on each $\tri_{\mu_i}$, which we can do
        because the $\tri_{\mu_i}$ have no tetrahedra in common).
        We now combine $b'$ with all possible markings
        on the \emph{new} tetrahedra in $\tri_\nu$ that are not already
        present in $\tri'$; there are $\leq k+1$ new tetrahedra
        because they must all belong to the bag at $\nu$.
        Again we discard combinations that mark an edge more than twice,
        or that mark an internal edge less than twice;
        any combination that remains must arise from a taut angle structure
        on $\tri_\nu$, whereupon we add its boundary marking
        pattern to our solution set.

        In summary, we obtain $\leq 3^{6(k+1)}$ marking patterns
        on $\tri'$ which we combine with $3^{k+1}$ choices of markings
        for the new tetrahedra in $\tri_\nu$,
        giving a grand total of $\leq 3^{7(k+1)}$
        combinations overall.  Adding in a factor of $k$ to merge
        marking patterns and test for bad edge markings,
        the overall running time of this step is
        $O(k \cdot 3^{7(k+1)}) = O(k \cdot 3^{7k})$.
    \end{itemize}

    Let $\rho$ be the root node.  Since $\tri_\rho = \tri$ has no boundary
    faces (and hence no boundary edges), the full triangulation $\tri$
    has a taut angle structure if and only if the solution set for
    $\tri_\rho$ contains the empty marking pattern (as opposed to no
    marking patterns at all).
    Since there are $O(n)$ nodes in our tree decomposition,
    the total running time for the algorithm is
    $O(nk \cdot 3^{7k})$.
\end{proof}

\section{Discussion}
\label{sec:conc}

Theorem~\ref{thm:np} shows that even if we
restrict our attention to orientable triangulations with no boundary
faces, detecting taut angle structures is still \NP-complete.
However, our construction creates triangulations with many ideal
vertices.  It would be interesting to know if this \NP-completeness
result could be tightened to detecting taut angle structures on
\emph{one-vertex} triangulations.

We have an explicit script that uses the software package
\regina\ to build the triangulation $\tri_\ms$ for a given
\problemname{monotone 1-in-3 sat} instance $\ms$.
In the full version of this paper we discuss this script further
and describe some of the 3-manifolds that it produces.

Theorems~\ref{thm:cutwidth} and \ref{thm:treewidth} help explain why
taut angle structures are relatively easy to detect in practice
\cite{hodgson11-veering}:
there are many triangulations $\tri$ for which $\fpg{\tri}$
has small cutwidth and/or treewidth.\footnote{%
    In contrast, there are at present no conjectured examples of manifolds
    that do \emph{not} have small treewidth triangulations.  Such a
    conjecture would likely be extremely difficult to prove.}
In the closed setting, for
instance, the conjectured minimal triangulations of many Seifert fibred spaces
have extremely small treewidth \cite{martelli04-families,matveev98-or6},
and common building blocks
such as layered solid tori and triangular prisms have treewidth 1 and 3
respectively.
Small cutwidth and treewidth triangulations have also been found
fast to work with in
other settings, such as normal surface theory \cite{burton10-dd}.

It is worth considering whether we can find a more powerful parameter than
the treewidth of the face pairing graph.  In the more general setting of
constraint satisfaction problems, it is known (under certain hypotheses)
that treewidth essentially yields the best algorithms \cite{marx07-treewidth}.
In our setting, however, we are also subject to strong topological constraints
that are difficult to analyse in a purely combinatorial framework, and
that may provide new opportunities for optimisation.

Looking forward: the frameworks in this paper for \NP-completeness
and fixed-parameter tractability have significant potential
for use with \emph{normal surface theory}, which is the central
algorithmic machine for solving problems such as unknot recognition,
3-sphere recognition, prime decomposition, and many more.
A key feature of both taut angle structures and normal surfaces
is that they correspond to vertices of a high-dimensional polytope
subject to simple combinatorial constraints derived from the
tetrahedra \cite{kang05-taut2,luo08-angle-normal}.
Moreover, both taut angle structures and normal surfaces
can be incrementally ``extended'' through different sections of the
triangulation according to constraints derived from the local face
gluings---a technique used throughout this paper.

Although normal surfaces are more numerous and more difficult to work
with, this common foundation gives us hope that the techniques developed
here could, with further research, be used to tackle some of
the fundamental open complexity problems in knot theory and 3-manifold
topology.

\appendix

\section*{Appendix: The fork gadget}

In Section~\ref{sec:fork} we outline the construction of the fork
gadget, but we do not give the precise 21-tetrahedron triangulation
of the annular prism.  Here we present this 21-tetrahedron
triangulation in full.  We refer the reader to the labels and diagrams
from Section~\ref{sec:fork}.

Table~\ref{tab:forkprism} lists the individual face gluings for the
annular prism as shown in Figure~\ref{fig:forkprism}.
There are 21 tetrahedra labelled $\Delta_1,\ldots,\Delta_{21}$,
and the four vertices of each tetrahedron are labelled $1,2,3,4$.
Each row of the table represents a tetrahedron,
and each column represents one of its four faces.
For instance, the top-left cell of table indicates that
the face with vertices $1,2,3$ of tetrahedron $\Delta_1$ is glued to the
face with vertices $3,1,2$ (in that order) of tetrahedron $\Delta_{18}$
(the same gluing can be seen from the other side in the
fourth-last row of the table).

\begin{table}[ht]
\centering
\[\begin{array}{c|c|c|c|c}
&
\multicolumn{1}{r|}{\mathrm{Face}\ 123} &
\multicolumn{1}{r|}{\mathrm{Face}\ 124} &
\multicolumn{1}{r|}{\mathrm{Face}\ 134} &
\multicolumn{1}{r}{\mathrm{Face}\ 234} \\
\hline
\Delta_{1\phantom{0}} & \Delta_{18} : 312 & \Delta_{13} : 312 & \Delta_{18} : 324 & \Delta_{4\phantom{0}} : 234 \\
\Delta_{2\phantom{0}} & \Delta_{7\phantom{0}} : 342 & \Delta_{20} : 342 & \Delta_{20} : 312 & \Delta_{11} : 234 \\
\Delta_{3\phantom{0}} & \Delta_{19} : 312 & \Delta_{8\phantom{0}} : 342 & \Delta_{16} : 321 & \mbox{\em Outer L} \\
\Delta_{4\phantom{0}} & \mbox{\em Outer L} & \Delta_{9\phantom{0}} : 124 & \mbox{\em Outer R} & \Delta_{1\phantom{0}} : 234 \\
\Delta_{5\phantom{0}} & \Delta_{16} : 342 & \Delta_{15} : 321 & \Delta_{19} : 324 & \mbox{\em Outer R} \\
\Delta_{6\phantom{0}} & \mbox{\em Inner} & \Delta_{12} : 342 & \Delta_{21} : 312 & \mbox{\em Upper} \\
\Delta_{7\phantom{0}} & \mbox{\em Inner} & \Delta_{11} : 132 & \Delta_{8\phantom{0}} : 134 & \Delta_{2\phantom{0}} : 312 \\
\Delta_{8\phantom{0}} & \Delta_{14} : 324 & \Delta_{14} : 321 & \Delta_{7\phantom{0}} : 134 & \Delta_{3\phantom{0}} : 412 \\
\Delta_{9\phantom{0}} & \mbox{\em Upper} & \Delta_{4\phantom{0}} : 124 & \mbox{\em Upper} & \Delta_{10} : 234 \\
\Delta_{10} & \Delta_{13} : 324 & \Delta_{11} : 124 & \Delta_{11} : 134 & \Delta_{9\phantom{0}} : 234 \\
\Delta_{11} & \Delta_{7\phantom{0}} : 142 & \Delta_{10} : 124 & \Delta_{10} : 134 & \Delta_{2\phantom{0}} : 234 \\
\Delta_{12} & \Delta_{21} : 423 & \Delta_{13} : 124 & \Delta_{13} : 134 & \Delta_{6\phantom{0}} : 412 \\
\Delta_{13} & \Delta_{1\phantom{0}} : 241 & \Delta_{12} : 124 & \Delta_{12} : 134 & \Delta_{10} : 213 \\
\Delta_{14} & \Delta_{8\phantom{0}} : 421 & \Delta_{15} : 124 & \Delta_{15} : 134 & \Delta_{8\phantom{0}} : 213 \\
\Delta_{15} & \Delta_{5\phantom{0}} : 421 & \Delta_{14} : 124 & \Delta_{14} : 134 & \mbox{\em Lower} \\
\Delta_{16} & \Delta_{3\phantom{0}} : 431 & \Delta_{17} : 124 & \Delta_{17} : 134 & \Delta_{5\phantom{0}} : 312 \\
\Delta_{17} & \mbox{\em Lower} & \Delta_{16} : 124 & \Delta_{16} : 134 & \mbox{\em Lower} \\
\Delta_{18} & \Delta_{1\phantom{0}} : 231 & \Delta_{19} : 124 & \Delta_{19} : 134 & \Delta_{1\phantom{0}} : 314 \\
\Delta_{19} & \Delta_{3\phantom{0}} : 231 & \Delta_{18} : 124 & \Delta_{18} : 134 & \Delta_{5\phantom{0}} : 314 \\
\Delta_{20} & \Delta_{2\phantom{0}} : 341 & \Delta_{21} : 124 & \Delta_{21} : 134 & \Delta_{2\phantom{0}} : 412 \\
\Delta_{21} & \Delta_{6\phantom{0}} : 341 & \Delta_{20} : 124 & \Delta_{20} : 134 & \Delta_{12} : 231 \\
\end{array}\]
\caption{The 21-tetrahedron triangulation of the prism over the annulus}
\label{tab:forkprism}
\end{table}

There are 12 faces on the boundary of the annular prism:
three on the upper annulus with vertices $A,B,E$ (marked
\emph{Upper} in the table);
three on the lower annulus with vertices $C,D,F$ (marked
\emph{Lower} in the table);
four on the outer cylinder with vertices $A,B,C,D$ (two
on the left-hand side of the diagram marked \emph{Outer L}, and two
on the right-hand side of the diagram marked \emph{Outer R}),
and finally two on the inner cylinder with vertices $E,F$
(marked \emph{Inner} in the table).

The final stage of the construction is to glue the upper annulus to the
lower annulus.  The result is shown in Table~\ref{tab:forkfull},
where only six boundary faces remain (on the outer and inner cylinders).
This triangulation is the fork gadget in its entirety.

\begin{table}[ht]
\centering
\[\begin{array}{c|c|c|c|c}
&
\multicolumn{1}{r|}{\mathrm{Face}\ 123} &
\multicolumn{1}{r|}{\mathrm{Face}\ 124} &
\multicolumn{1}{r|}{\mathrm{Face}\ 134} &
\multicolumn{1}{r}{\mathrm{Face}\ 234} \\
\hline
\Delta_{1\phantom{0}} & \Delta_{18} : 312 & \Delta_{13} : 312 & \Delta_{18} : 324 & \Delta_{4\phantom{0}} : 234 \\
\Delta_{2\phantom{0}} & \Delta_{7\phantom{0}} : 342 & \Delta_{20} : 342 & \Delta_{20} : 312 & \Delta_{11} : 234 \\
\Delta_{3\phantom{0}} & \Delta_{19} : 312 & \Delta_{8\phantom{0}} : 342 & \Delta_{16} : 321 & \mbox{\em Outer L} \\
\Delta_{4\phantom{0}} & \mbox{\em Outer L} & \Delta_{9\phantom{0}} : 124 & \mbox{\em Outer R} & \Delta_{1\phantom{0}} : 234 \\
\Delta_{5\phantom{0}} & \Delta_{16} : 342 & \Delta_{15} : 321 & \Delta_{19} : 324 & \mbox{\em Outer R} \\
\Delta_{6\phantom{0}} & \mbox{\em Inner} & \Delta_{12} : 342 & \Delta_{21} : 312 & \Delta_{15} : 432 \\
\Delta_{7\phantom{0}} & \mbox{\em Inner} & \Delta_{11} : 132 & \Delta_{8\phantom{0}} : 134 & \Delta_{2\phantom{0}} : 312 \\
\Delta_{8\phantom{0}} & \Delta_{14} : 324 & \Delta_{14} : 321 & \Delta_{7\phantom{0}} : 134 & \Delta_{3\phantom{0}} : 412 \\
\Delta_{9\phantom{0}} & \Delta_{17} : 213 & \Delta_{4\phantom{0}} : 124 & \Delta_{17} : 234 & \Delta_{10} : 234 \\
\Delta_{10} & \Delta_{13} : 324 & \Delta_{11} : 124 & \Delta_{11} : 134 & \Delta_{9\phantom{0}} : 234 \\
\Delta_{11} & \Delta_{7\phantom{0}} : 142 & \Delta_{10} : 124 & \Delta_{10} : 134 & \Delta_{2\phantom{0}} : 234 \\
\Delta_{12} & \Delta_{21} : 423 & \Delta_{13} : 124 & \Delta_{13} : 134 & \Delta_{6\phantom{0}} : 412 \\
\Delta_{13} & \Delta_{1\phantom{0}} : 241 & \Delta_{12} : 124 & \Delta_{12} : 134 & \Delta_{10} : 213 \\
\Delta_{14} & \Delta_{8\phantom{0}} : 421 & \Delta_{15} : 124 & \Delta_{15} : 134 & \Delta_{8\phantom{0}} : 213 \\
\Delta_{15} & \Delta_{5\phantom{0}} : 421 & \Delta_{14} : 124 & \Delta_{14} : 134 & \Delta_{6\phantom{0}} : 432 \\
\Delta_{16} & \Delta_{3\phantom{0}} : 431 & \Delta_{17} : 124 & \Delta_{17} : 134 & \Delta_{5\phantom{0}} : 312 \\
\Delta_{17} & \Delta_{9\phantom{0}} : 213 & \Delta_{16} : 124 & \Delta_{16} : 134 & \Delta_{9\phantom{0}} : 134 \\
\Delta_{18} & \Delta_{1\phantom{0}} : 231 & \Delta_{19} : 124 & \Delta_{19} : 134 & \Delta_{1\phantom{0}} : 314 \\
\Delta_{19} & \Delta_{3\phantom{0}} : 231 & \Delta_{18} : 124 & \Delta_{18} : 134 & \Delta_{5\phantom{0}} : 314 \\
\Delta_{20} & \Delta_{2\phantom{0}} : 341 & \Delta_{21} : 124 & \Delta_{21} : 134 & \Delta_{2\phantom{0}} : 412 \\
\Delta_{21} & \Delta_{6\phantom{0}} : 341 & \Delta_{20} : 124 & \Delta_{20} : 134 & \Delta_{12} : 231 \\
\end{array}\]
\caption{The complete triangulation of the fork gadget}
\label{tab:forkfull}
\end{table}

For reference, the six boundary faces as shown in
Figure~\ref{fig:forkglue} are as follows:
\begin{itemize}
    \item the upper triangle $\mathit{ABD}$ on the left-hand side of the outer
    cylinder is $\Delta_{4} : 213$;
    \item the lower triangle $\mathit{ACD}$ on the left-hand side of the outer
    cylinder is $\Delta_{3} : 243$;
    \item the upper triangle $\mathit{ABD}$ on the right-hand side of the outer
    cylinder is $\Delta_{4} : 413$;
    \item the lower triangle $\mathit{ACD}$ on the right-hand side of the outer
    cylinder is $\Delta_{5} : 423$;
    \item the upper triangle $\mathit{EFE}$ on the inner cylinder
    is $\Delta_{6} : 312$;
    \item the lower triangle $\mathit{FEF}$ on the inner cylinder
    is $\Delta_{7} : 321$.
\end{itemize}

%
%

\section*{Acknowledgements}

The first author is grateful to the Australian Research Council for their
support under the Discovery Projects funding scheme
(projects DP1094516 and DP110101104).

%
%

\small
\bibliographystyle{amsplain}
\bibliography{pure}

%
%

\bigskip
\noindent
Benjamin A.~Burton \\
School of Mathematics and Physics, The University of Queensland \\
Brisbane QLD 4072, Australia \\
(bab@maths.uq.edu.au)

\bigskip
\noindent
Jonathan Spreer \\
School of Mathematics and Physics, The University of Queensland \\
Brisbane QLD 4072, Australia \\
(j.spreer@uq.edu.au)

\end{document}